\DeclareMathOperator{\supp}{supp}
\DeclareMathOperator{\infconv}{infconv}
\DeclareMathOperator{\PSH}{PSH}
\DeclareMathOperator{\tr}{tr}
\newtheorem{lemma}{Lemma}
\newtheorem{theorem}{Theorem}
\newtheorem{definition}{Definition}\theoremstyle{definition}
\newtheorem{remark}{Remark}\theoremstyle{definition}
\title{Extremal potentials and equilibrium measures associated to collections of Kähler classes}
\author{Jakob Hultgren}
\date{}
\newcommand{\Addresses}{{
  \bigskip
  \footnotesize

  J.~Hultgren, \textsc{Department of Mathematics, University of Maryland,
    4176 Campus Drive, College Park, MD 20742-4015, USA}\par\nopagebreak
  \textit{E-mail address}: \texttt{hultgren@umd.edu}

}}
\begin{document}

\maketitle

\abstract{
Given a collection of Kähler forms and a continuous weight on a compact complex manifold we show that it is possible to define natural new notions of extremal potentials and equilibrium measures which coincide with classical notions when the collection is a singleton. We prove two regularity results and set up a variational framework. Applications to Fekete points are treated elsewhere. 
}

\section{Introduction}
Let $X$ be a compact Kähler manifold, $\theta$ a Kähler form on $X$ and $\phi$ a continuous (weight) function on $X$. A well known envelope construction
determines a canonical $\theta$-plurisubharmonic function $\mathcal P_{\theta}(\phi)$ (the \emph{extremal potential/equilibrium potential} of $\phi$) satisfying $\mathcal P_{\theta}(\phi)\leq \phi$
and $\phi' \leq \mathcal P_{\theta}(\phi)$ for any other $\theta$-plurisubharmonic function $\phi'$ such that $\phi'\leq \phi$. The (non-pluripolar) Monge-Ampère measure of $\mathcal P_{\theta}(\phi)$ is called the \emph{equilibrium measure} of $(\theta,\phi)$ 
(see \cite{BBWN} and references therein). 

In this paper, we start with a finite collection of Kähler forms $\theta_1,\ldots,\theta_m$ and a continuous function $\phi$ as above. The purpose of the paper is to show that this data defines natural extremal potentials $\phi_1,\ldots,\phi_m$ and a natural equilibrium measure 
$$ \mu_{eq} = \mu_{eq}(\theta_1,\ldots,\theta_m,\phi)$$ which, in the case when $m=1$ and $\theta_1=\theta$, coincides with the classical concepts above. 

Using a regularization techniques by Berman \cite{B} and regularity results by Ko\l odziej \cite{Kol} we give two regularity results for these extremal potentials, one in the case when $\phi$ is smooth and one in the case when the classical equilibrium measure of $\phi$ has bounded density. We also set up a variational framework in which $\mu_{eq}$ arise as the Gateaux derivative of a certain functional on the space of continuous functions on $X$. This generalizes the differentiability property in \cite{BB}, where the classical equilibrium measure arise as the Gateaux derivative of the equilibrium energy, i.e. the composition of the Monge-Ampère energy with the projection operator $\mathcal P$.

We now turn to the definitions of the extremal potentials and equilibrium measure. While the classical extremal potential is (usually) defined using an envelope construction, the definition we will give is in terms of a weak Monge-Ampère equation. We will use $C(X)$ to denote the space of continuous functions on $X$. 
\begin{definition}\label{def:extpot}
Let $(\theta_1,\ldots,\theta_m)$ be Kähler forms on $X$ and $\phi\in C(X)$. Then $(\phi_1,\ldots,\phi_m)$ is a vector of \emph{extremal potentials} of $(\theta_1,\ldots,\theta_m,\phi)$ if
\begin{equation}
\frac{(\theta_1+dd^c \phi_1)^n}{\int_X \theta_1^n} = \ldots = \frac{(\theta_m+dd^c\phi_m)^n}{\int_X\theta_m^n} \label{eq:EqPot1}
\end{equation}
and
\begin{eqnarray}
\sum \phi_j & \leq & \phi \textnormal{ on } X  \label{eq:EqPot2} \\
\sum \phi_j & = & \phi \textnormal{ on } \supp (\theta_1+dd^c \phi_1)^n. \label{eq:EqPot3}
\end{eqnarray}
\end{definition}
The Monge-Ampère masses in \eqref{eq:EqPot1} should be interpreted in the sense of non-pluripolar products and we will be interested in solutions $(\phi_1,\ldots,\phi_m)$ such that each $\phi_j$ is in the finite energy space $\mathcal E_1(\theta_j)$ (see \cite{BBGZ} for details). It will be shown (see Theorem~\ref{thm:ExistUnique} below) that \eqref{eq:EqPot1}-\eqref{eq:EqPot3} always admit such a solution and that this solution is unique up to addition of constants. This motivates the following definition:
\begin{definition}\label{def:EqMeas}
The \emph{equilibrium measure} of the data $(\theta_1,\ldots,\theta_m,\phi)$ is 
$$ \mu_{eq}:= (\theta_1+dd^c \phi_1)^n $$
where $(\phi_1,\ldots,\phi_m)$ is a vector of extremal potentials of $(\theta_1,\ldots,\theta_m,\phi)$.
\end{definition}

For $m=1$, \eqref{eq:EqPot1}-\eqref{eq:EqPot3} is equivalent to $\phi_1=\mathcal P_{\theta_1}(\phi)$ (see Lemma~\ref{lem:m1} below). It follows that in this case $\mu_{eq}$ coincides with the classical equilibrium measure. 

\begin{remark}
It is easy to verify that although the extremal potentials $(\phi_1,\ldots,\phi_m)$ depend on $(\theta_1,\ldots,\theta_m, \phi)$, the Kähler currents
$$T_1= \theta_1+dd^c \phi_1, \; \ldots, \; T_m= \theta_m+dd^c \phi_m$$
and the equilibrium measure of $(\theta_1,\ldots,\theta_m, \phi)$ are determined by the Kähler classes $[\theta_1],\ldots,[\theta_m]$ and the current $T_\phi:=\sum_{j=1}^m \theta_j + dd^c \phi$. This means that fixing a Kähler class $\gamma$, a current in $\gamma$ with continuous Kähler potential and a \emph{decomposition of $\gamma$}, i.e. set of Kähler classes $\gamma_1,\ldots,\gamma_m$ such that $\sum_{j=1}
^m\gamma_j=\gamma$, determines Kähler currents as above and an equilibrium measure. This is reminiscent of the context of coupled Kähler-Einstein metrics where a decomposition of $-c_1(X)$, when positive, determines a coupled Kähler-Einstein tuple (see \cite{HWN}).
\end{remark}

Our first theorem is the following, motivating the definitions above.
\begin{theorem}
\label{thm:ExistUnique}
Let $(\theta_1,\ldots,\theta_m)$ be Kähler forms on $X$ and $\phi\in C(X)$. Then \eqref{eq:EqPot1}--\eqref{eq:EqPot3} admits a solution $(\phi_1,\ldots,\phi_m)$ such that $\phi_j\in \mathcal E_1(\theta_j)$ for each $j\in \{1,\ldots,m\}$. Moreover, if $(\phi_1',\ldots,\phi_m')$ is another such solution to \eqref{eq:EqPot1}--\eqref{eq:EqPot3}, then $\phi_j - \phi_j'$ are constant for each $j\in \{1,\ldots,m\}$ and $\sum \phi_j = \sum \phi_j'$.
\end{theorem}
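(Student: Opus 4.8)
The plan is to realize the extremal potentials as maximizers of a concave functional and to read off \eqref{eq:EqPot1}--\eqref{eq:EqPot3} as its Euler--Lagrange conditions, while proving uniqueness separately by a direct comparison argument. Write $V_j := \int_X \theta_j^n$ and let $E_{\theta_j}$ denote the Monge--Ampère (Aubin--Mabuchi) energy on $\mathcal E_1(\theta_j)$, normalized so that $E_{\theta_j}(\psi+c)=E_{\theta_j}(\psi)+cV_j$ and so that its Gateaux derivative is the non-pluripolar Monge--Ampère measure $(\theta_j+dd^c\psi)^n$. I would then consider
$$ F(\psi_1,\ldots,\psi_m):=\sum_{j=1}^m \frac{E_{\theta_j}(\psi_j)}{V_j} $$
on the constraint set $\mathcal C:=\{(\psi_1,\ldots,\psi_m):\psi_j\in\mathcal E_1(\theta_j),\ \sum_j\psi_j\le\phi\}$. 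Both $F$ and $\mathcal C$ are invariant under the $(m-1)$-dimensional family of shifts $(\psi_j)\mapsto(\psi_j+c_j)$ with $\sum_j c_j=0$, which is exactly the ambiguity asserted in the theorem: $\phi_j'=\phi_j+c_j$ with $\sum_j c_j=0$ gives $\phi_j-\phi_j'$ constant and $\sum_j\phi_j=\sum_j\phi_j'$.

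For existence I would run the direct method. Writing $\psi_j=\tilde\psi_j+a_j$ with $\sup_X\tilde\psi_j=0$, the bound $E_{\theta_j}(\tilde\psi_j)\le 0$ and the constraint (tested against normalized volume) show that $\sum_j a_j$, and hence $F$, is bounded above on $\mathcal C$. Along a maximizing sequence, finiteness of $\sup F$ bounds $\sum_j a_j$ from below, hence bounds each $E_{\theta_j}(\tilde\psi_j)$ from below; after using the shift invariance to equalize the $a_j$, the standard compactness of finite-energy sublevel sets, upper semicontinuity of $E_{\theta_j}$, and closedness of the constraint under $L^1$-limits (see \cite{BBGZ}) yield a maximizer $(\phi_1,\ldots,\phi_m)\in\mathcal C$. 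Condition \eqref{eq:EqPot2} is just feasibility. For \eqref{eq:EqPot1} I would perturb by the feasible tuples $(\mathcal P_{\theta_j}(\phi_j+t\eta_j))_j$ for smooth $\eta_j$ with $\sum_j\eta_j=0$ (feasible since $\mathcal P_{\theta_j}(\phi_j+t\eta_j)\le\phi_j+t\eta_j$ and $\sum_j t\eta_j=0$): the Berman--Boucksom differentiability of $E_{\theta_j}\circ\mathcal P_{\theta_j}$ recalled in the introduction \cite{BB} gives $\tfrac{d}{dt}F|_{t=0}=\sum_j V_j^{-1}\int_X\eta_j\,(\theta_j+dd^c\phi_j)^n$, and its vanishing for all such $\eta_j$ forces the normalized measures to coincide. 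For \eqref{eq:EqPot3} I would first show each $\phi_j=\mathcal P_{\theta_j}(\phi-\sum_{k\neq j}\phi_k)$: the right-hand side dominates $\phi_j$, keeps the tuple feasible, and cannot strictly increase $E_{\theta_j}$ by maximality, so strict monotonicity of the energy forces equality. The orthogonality relation for envelopes then places $(\theta_j+dd^c\phi_j)^n$ on the contact set $\{\phi_j=\phi-\sum_{k\neq j}\phi_k\}=\{\sum_k\phi_k=\phi\}$, which is \eqref{eq:EqPot3}.

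Uniqueness I would prove by a self-contained comparison, independent of the variational picture. Given two solutions $(\phi_j)$ and $(\phi_j')$, set $\mu_j=(\theta_j+dd^c\phi_j)^n$, $\mu_j'=(\theta_j+dd^c\phi_j')^n$, $S=\sum_j\phi_j$, $S'=\sum_j\phi_j'$, and let $\nu,\nu'$ be the common normalized measures from \eqref{eq:EqPot1}. Monotonicity of the complex Monge--Ampère operator gives $\sum_j V_j^{-1}\int_X(\phi_j-\phi_j')(\mu_j'-\mu_j)\ge 0$ with each summand nonnegative, and by \eqref{eq:EqPot1} this quantity equals $\int_X(S-\phi)\,d\nu'+\int_X(S'-\phi)\,d\nu$, using $S'=\phi$ $\nu'$-a.e.\ and $S=\phi$ $\nu$-a.e.\ from \eqref{eq:EqPot2}--\eqref{eq:EqPot3}. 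Since $S,S'\le\phi$, both integrands are nonpositive, so the quantity is $\le 0$; hence it vanishes, each summand vanishes, and the equality case of Monge--Ampère monotonicity forces each $\phi_j-\phi_j'$ to be constant. Writing $\phi_j'=\phi_j+c_j$, the vanishing of $\int_X(S'-\phi)\,d\nu$ gives $S'=\phi$ $\nu$-a.e., whence $\sum_j c_j=S'-S=0$ on $\supp\nu$ and therefore everywhere, so $\sum_j\phi_j=\sum_j\phi_j'$.

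I expect the main obstacle to lie in the rigorous derivation of the Euler--Lagrange system, for two reasons. First, the $\theta_j$-plurisubharmonic functions do not form a linear space, so the variation yielding \eqref{eq:EqPot1} must be routed through envelopes and the differentiability of $E_{\theta_j}\circ\mathcal P_{\theta_j}$ rather than through naive additive perturbations. Second, in \eqref{eq:EqPot3} the effective weight $\phi-\sum_{k\neq j}\phi_k$ need not be continuous, or even bounded, when the $\phi_k$ only lie in $\mathcal E_1(\theta_k)$, so the orthogonality relation must be invoked in its finite-energy form; controlling the contact set at this level of generality is the most delicate point, and it is presumably where the finite-energy formalism of \cite{BBGZ} does the real work. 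By contrast, the compactness underlying existence and the comparison underlying uniqueness are comparatively standard once this machinery is in place.
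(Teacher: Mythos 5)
Your proposal is correct in substance and shares the paper's variational core: existence by the direct method for a concave, translation-invariant functional, and \eqref{eq:EqPot1} obtained by perturbing through envelopes $\mathcal P_{\theta_j}(\phi_j+t\eta_j)$ and invoking the differentiability property \eqref{eq:EnergyDifferentiability}. (Your constrained problem $\sup\{\sum_j E_{\theta_j}/\int_X\theta_j^n \textnormal{ on } \sum_j\psi_j\le\phi\}$ is the paper's penalized functional $f_\phi$ after normalizing $\sup_X(\sum_j\psi_j-\phi)=0$, so that part is only cosmetically different.) You deviate genuinely in two places. For \eqref{eq:EqPot3}, the paper does not pass through the identity $\phi_j=\mathcal P_{\theta_j}(\phi-\sum_{k\neq j}\phi_k)$ and the orthogonality relation; it instead computes the one-sided Danskin derivative of the sup term, $\frac{d}{dt_+}\sup_X\left(\sum_j\phi_j+tv-\phi\right)\big|_{t=0}=\sup_{\{\sum_j\phi_j=\phi\}}v=0$ for $v\ge 0$ supported in $\{\sum_j\phi_j<\phi\}$, so that maximality forces $\int_X v\,(\theta_1+dd^c\phi_1)^n=0$. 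This sidesteps exactly the difficulty you flag: orthogonality for the obstacle $\phi-\sum_{k\neq j}\phi_k$, which is not continuous when the $\phi_k$ are merely finite-energy, and whose finite-energy form is heavier machinery than anything the paper uses. For uniqueness, your direct comparison (pairing $\phi_j-\phi_j'$ against the difference of Monge--Ampère measures and using \eqref{eq:EqPot1}--\eqref{eq:EqPot3} to identify the result with $\int_X(S-\phi)\,d\nu'+\int_X(S'-\phi)\,d\nu\le 0$) is genuinely different from the paper, which shows that two maximizers make each $E_{\theta_j}$ affine along the connecting segment, deduces $(\theta_j+dd^c\phi_j)^n=(\theta_j+dd^c\phi_j')^n$ by differentiating $E_{\theta_j}\circ\mathcal P_{\theta_j}$ transversally to the segment, and concludes by Theorem~B of \cite{GZ}. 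Your route buys a self-contained statement for arbitrary finite-energy solutions, without needing the backward implication ``solution $\Rightarrow$ maximizer''; the paper's route buys that equivalence, which it reuses (e.g.\ in Lemma~\ref{lem:m1}).

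Two steps you invoke as black boxes carry real content and should be discharged. First, ``the equality case of Monge--Ampère monotonicity forces $\phi_j-\phi_j'$ constant'' is not an off-the-shelf citation: from the vanishing of $\int_X(\phi_j-\phi_j')\left((\theta_j+dd^c\phi_j')^n-(\theta_j+dd^c\phi_j)^n\right)$ you should argue that $t\mapsto E_{\theta_j}(\phi_j'+t(\phi_j-\phi_j'))$ is concave with equal endpoint derivatives, hence affine, and then either run the paper's transversal-differentiation argument or an integration-by-parts/Cauchy--Schwarz argument to get equality of the two Monge--Ampère measures, after which Theorem~B of \cite{GZ} finishes; so your uniqueness ultimately rests on the same deep input as the paper's. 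Second, ``strict monotonicity of the energy'' ($u\le w$ and $E_{\theta_j}(u)=E_{\theta_j}(w)$ imply $u=w$) holds in $\mathcal E_1(\theta_j)$ but requires the domination principle. Finally, note that both you and the paper apply \eqref{eq:EnergyDifferentiability} at weights of the form $\phi_j+tv$ with $\phi_j\in\mathcal E_1(\theta_j)$ merely upper semi-continuous, whereas \cite{BB} states it for continuous weights; this extension is needed in either approach and is not a defect particular to yours.
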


Our next theorem concerns regularity of solutions to \eqref{eq:EqPot1}-\eqref{eq:EqPot3}. 
\begin{theorem}
\label{thm:Regularity}
    Let $(\theta_1,\ldots,\theta_m)$ be Kähler forms on $X$ and $\phi\in C(X)$. If the Monge-Ampère mass
        $$ \left(\sum_{j=1}^m\theta_j + dd^c\mathcal P_{\sum\theta_j}(\phi)\right)^n$$
    is absolutely continuous and has bounded density, then any solution to \eqref{eq:EqPot1}--\eqref{eq:EqPot3} is continuous and the equilibrium measure $\mu_{eq}$ is absolutely continuous with bounded density. If $\phi$ is smooth, then any solution to \eqref{eq:EqPot1}--\eqref{eq:EqPot3} lies in the Hölder class $C^{1,\alpha}(X)$ for any $\alpha\in (0,1)$ and the currents
        $$ \theta_j + dd^c \phi_j$$
    are bounded. 
\end{theorem}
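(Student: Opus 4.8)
The plan is to reduce the whole statement to the single classical envelope $\mathcal P_\theta(\phi)$, where $\theta:=\sum_j\theta_j$, via the identity
\[
\sum_{j=1}^m\phi_j=\mathcal P_{\theta}(\phi),
\]
and then to extract regularity from known facts about $\mathcal P_\theta(\phi)$ plus elementary positivity of currents. Throughout write $V_j=\int_X\theta_j^n$, $T_j=\theta_j+dd^c\phi_j$, and let $\nu$ be the common probability measure in \eqref{eq:EqPot1}, so that $T_j^n=V_j\,\nu$ and $\mu_{eq}=V_1\nu$. Set $\psi:=\sum_j\phi_j$; then $\psi$ is $\theta$-plurisubharmonic with $\psi\le\phi$, and $\theta+dd^c\psi=\sum_jT_j$.

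The first and main step is the identity $\psi=\mathcal P_\theta(\phi)$. The inequality $\psi\le\mathcal P_\theta(\phi)$ is immediate, since $\psi$ is $\theta$-psh and $\le\phi$. For the reverse inequality I would use the variational characterization of the extremal potentials underlying Theorem~\ref{thm:ExistUnique}: $(\phi_1,\dots,\phi_m)$ maximizes $\sum_j V_j^{-1}E_{\theta_j}(u_j)$ over all $(u_j)$ with $u_j\in\mathcal E_1(\theta_j)$ and $\sum_ju_j\le\phi$, where $E_{\theta_j}$ is the Monge--Ampère energy. Maximizing first over decompositions with a fixed sum $S=\sum_ju_j$ and then over $S\le\phi$, the outer problem is solved by $S=\mathcal P_\theta(\phi)$ (the largest $\theta$-psh competitor, by monotonicity of $E_\theta$). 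The identity then follows from the inf-convolution relation $\max_{\sum_ju_j=S}\sum_j V_j^{-1}E_{\theta_j}(u_j)=V^{-1}E_\theta(S)$ with $V=\int_X\theta^n$. This last relation is a Brunn--Minkowski/Legendre-duality statement for the Monge--Ampère energy, whose extremal decomposition is the ``proportional'' one in the simplest models; establishing it is where I expect the real difficulty to lie, and is the main obstacle.

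Granting $\psi=\mathcal P_\theta(\phi)$, the absolute-continuity statements are soft. By multilinearity of the non-pluripolar product and positivity of all mixed terms in the expansion of $\bigl(\sum_jT_j\bigr)^n$,
\[
\mu_{eq}=T_1^n\ \le\ \Bigl(\sum_jT_j\Bigr)^n=(\theta+dd^c\psi)^n=\bigl(\theta+dd^c\mathcal P_\theta(\phi)\bigr)^n .
\]
Hence, if the classical equilibrium measure $\bigl(\theta+dd^c\mathcal P_\theta(\phi)\bigr)^n$ is absolutely continuous with bounded density, so is $\mu_{eq}$, and likewise each $T_j^n=(V_j/V_1)\mu_{eq}$ (Minkowski's determinant inequality for the densities gives the sharp constant $V_1\bigl(\sum_jV_j^{1/n}\bigr)^{-n}$, but is not needed here). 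Continuity of each $\phi_j$ is then Kolodziej's theorem \cite{Kol}: $\phi_j\in\mathcal E_1(\theta_j)$ solves a Monge--Ampère equation whose right-hand side has $L^p$ density for every $p>1$, so the normalized solution is continuous, and by uniqueness in $\mathcal E_1(\theta_j)$ this solution is $\phi_j$.

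For the smooth case I would first invoke the regularity of the classical envelope: when $\phi$ is smooth one has $\mathcal P_\theta(\phi)\in C^{1,1}(X)$ (Berman, via the regularization of \cite{B} together with \cite{Kol}); in particular $\bigl(\theta+dd^c\mathcal P_\theta(\phi)\bigr)^n$ has bounded density, so the previous paragraph already yields continuity of the $\phi_j$ and boundedness of $\mu_{eq}$. Moreover $\theta+dd^c\psi=\theta+dd^c\mathcal P_\theta(\phi)$ is then a bounded current, and since $0\le T_j\le\sum_kT_k=\theta+dd^c\psi$ with every $T_j\ge0$, each current $T_j=\theta_j+dd^c\phi_j$ is bounded. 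Consequently $dd^c\phi_j$ is bounded, so $\phi_j$ has bounded Laplacian; by Calderón--Zygmund estimates $\phi_j\in W^{2,p}(X)$ for all $p<\infty$, and Sobolev embedding gives $\phi_j\in C^{1,\alpha}(X)$ for every $\alpha\in(0,1)$, completing the proof.
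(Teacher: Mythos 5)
Your reduction stands or falls with the identity $\sum_j\phi_j=\mathcal P_\theta(\phi)$, $\theta:=\sum_j\theta_j$, and with the ``inf-convolution relation'' you propose to derive it from, namely $\max_{\sum u_j=S}\sum_jV_j^{-1}E_{\theta_j}(u_j)=V^{-1}E_\theta(S)$, where $V_j=\int_X\theta_j^n$ and $V=\int_X\theta^n$. That relation is \emph{false} whenever the normalized volume forms $\theta_1^n/V_1$ and $\theta_2^n/V_2$ differ as measures (the generic situation). Indeed, take $S=0$, choose $v$ smooth with $\int_Xv\,(\theta_1^n/V_1-\theta_2^n/V_2)\neq0$, and set $u_1=tv$, $u_2=-tv$, $u_j=0$ for $j\geq3$; these are admissible for $|t|$ small, and the derivative at $t=0$ of $\sum_jV_j^{-1}E_{\theta_j}(u_j)$ is a positive multiple of $\int_Xv\,(\theta_1^n/V_1-\theta_2^n/V_2)\neq0$, so the left-hand side of your relation is strictly positive while $V^{-1}E_\theta(0)=0$. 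Nor is the failure merely by an additive constant (which would not affect maximizers): equality of Gateaux derivatives would require, for every K\"ahler potential $S$, a decomposition $\theta+dd^cS=\sum_jT_j$ with $T_j\geq0$ and $T_j^n=(V_j/V)(\theta+dd^cS)^n$, an overdetermined system which forces the $T_j$ to have constant eigenvalues with respect to $\theta+dd^cS$; for generic $S$ this is impossible unless the classes $[\theta_j]$ are proportional. This is precisely the point of Remark~\ref{rem:Diff}: the derivative of an infimal convolution inverts the \emph{sum of the inverse} Monge--Amp\`ere operators, which is not the Monge--Amp\`ere operator of the sum class. Had your relation been true, Theorem~\ref{thm:Differentiability} together with \eqref{eq:EnergyDifferentiability} would force $\mu_{eq}$ to be a constant multiple of the classical equilibrium measure $\bigl(\theta+dd^c\mathcal P_\theta(\phi)\bigr)^n$, i.e.\ Definition~\ref{def:EqMeas} would produce nothing new.

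Worse, the target identity $\sum_j\phi_j=\mathcal P_\theta(\phi)$ is itself false for general continuous $\phi$, so it cannot follow from any soft argument: on $X=\mathbb{P}^1\times\mathbb{P}^1$ take $[\theta_1]=H_1+\epsilon H_2$, $[\theta_2]=\epsilon H_1+H_2$ with $\epsilon\neq1$, let $u\in\PSH(\theta)$ solve $\theta+dd^cu=(1+\epsilon)[\Delta]$ ($\Delta$ the diagonal), and set $\phi=\max(u,-c)$, which is continuous and $\theta$-psh, so $\mathcal P_\theta(\phi)=\phi$. If $\phi$ were a sum of $\theta_j$-psh functions for every $c$, a normalization--compactness argument in $c$ would decompose $(1+\epsilon)[\Delta]$ into two closed positive currents in the classes $[\theta_j]$; by the support theorem each would be a multiple of $[\Delta]$, contradicting $[\theta_1]\notin\mathbb{R}[\Delta]$. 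So for large $c$ the extremal potentials satisfy $\sum_j\phi_j\neq\mathcal P_\theta(\phi)$. Hence your identity can at best hold under the regularity hypotheses of Theorem~\ref{thm:Regularity}, and any proof of it would have to use those hypotheses in an essential way --- nothing in your sketch does. (Your downstream steps are fine but moot: $T_1^n\leq(\sum_jT_j)^n$ by positivity of mixed non-pluripolar products, Ko\l odziej plus uniqueness in $\mathcal E_1(\theta_j)$ for continuity, and the sandwich $0\leq T_j\leq\theta+dd^c\mathcal P_\theta(\phi)$ with Calder\'on--Zygmund for $C^{1,\alpha}$. In fact, if under the theorem's hypotheses one could produce \emph{some} decomposition $\mathcal P_\theta(\phi)=\sum_jw_j$, $w_j\in\mathcal E_1(\theta_j)$, with all $(\theta_j+dd^cw_j)^n/V_j$ equal, then $(w_j)$ would solve \eqref{eq:EqPot1}--\eqref{eq:EqPot3} and uniqueness would give your identity; but producing such a decomposition is essentially as hard as the theorem itself.) The paper's proof takes a completely different route: it regularizes the system by the family \eqref{eq:Beta}, proves existence, uniqueness and regularity for each $\beta$ (Theorem~\ref{thm:BetaTemp}), establishes estimates uniform in $\beta$ --- Lemma~\ref{lem:SumBound}, where $\mathcal P_{\sum\theta_j}(\phi)$ enters only as a comparison object in the comparison principle, then Ko\l odziej's $L^\infty$/capacity estimates and a Siu-type Laplacian estimate --- and passes to the limit $\beta\rightarrow\infty$ (Theorem~\ref{thm:Convergence}).
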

\begin{remark}
It would be interesting to see if Definition~\ref{def:extpot} admits an envelope construction similar to the case of $m=1$. Indeed, the extremal potentials arise as the minimizer of a functional which is monotone with respect to the usual (partial) order on $\prod_{j=1}^m \mathcal E_1(\theta_j)$. It is likely that an envelope construction would show that the extremal potentials depend continuously on $\phi$ (in the topology of uniform convergence). This would imply continuity of the extremal potentials whenever $\phi$ is continuous by the second part of Theorem~\ref{thm:Regularity} and straight forward approximation arguments. 
\end{remark}

We now turn to the variational framework for \eqref{eq:EqPot1}-\eqref{eq:EqPot3}. 
Let 
$ f_\phi$ be the functional on $\prod_{j=1}^m \mathcal E_1(\theta_j)$ given by
\begin{equation} 
    f_\phi(\phi_1,\ldots,\phi_m) = \sum_{j=1}^m \frac{E_{\theta_j}(\phi_j)}{\int_X \theta_j^n} - \sup_X \left(\sum_{j=1}^m \phi_j - \phi\right). 
    \label{eq:f}
\end{equation}
We define $F$ as the functional on $C(X)$ given by
    $$ F(\phi) = \sup f_\phi $$
where the supremum is taken over $\prod_{j=1}^m \mathcal E_1(\theta_j)$. 
This functional should be though of as a normalized generalization of $E_{\theta}\circ \mathcal P_\theta$. Indeed, if $m=1$, 
$$ F=\frac{E_{\theta_1}\circ \mathcal P_{\theta_1}}{\int_X\theta_1^n} $$
as shown in  Lemma~\ref{lem:m1} below. For $m\geq 2$, $F$ will play the role of the infimal convolution of the functionals 
$$ \frac{E_{\theta_j}\circ \mathcal P_{\theta_j}}{\int_X\theta_j^n}, \;\;\; j\in \{1,\ldots,m\}. $$

A central property of $E_{\theta_1}(\mathcal P_{\theta_1}(\phi))$ is the following differentiability property (\cite{BB}, Theorem~B): For any $v,\phi\in C(X)$, we have
\begin{equation}
        \label{eq:EnergyDifferentiability}
        \frac{d}{dt} E_{\theta_j}(\mathcal P_{\theta_j}(\phi)) = \int_X v \nu.
\end{equation}  
where $\nu$ is the classical equilibrium measure of $(\theta_1,\phi)$. Our next result generalizes this. 
\begin{theorem}
\label{thm:Differentiability}
Let $\theta_1,\ldots,\theta_m$ be Kähler forms on $X$. The functional $F$ is Gateaux differentiable on $C(X)$ and its Gateaux derivative at $\phi\in C(X)$ is given by $\mu_{eq}$. In other words, for any $\phi,v\in C(X)$, the function on $\mathbb R$ given by
$$ g(t) = F(\phi+tv) $$
is differentiable at $t=0$ and 
$$ g'(0) = \int_X v \mu_{eq}. $$
\end{theorem}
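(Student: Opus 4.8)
The plan is to exploit the concavity of $F$. First I would rewrite the functional in a form where the supremum in \eqref{eq:f} is absorbed: using the scaling $E_{\theta_j}(\phi_j+c)=E_{\theta_j}(\phi_j)+c\int_X\theta_j^n$ one shifts a single potential to make the supremum term vanish, giving the constrained description $F(\phi)=\sup\{\sum_j E_{\theta_j}(\phi_j)/\int_X\theta_j^n : \sum_j\phi_j\le\phi\}$. From this description $F$ is manifestly monotone and concave on $C(X)$ (for a convex combination of weights, convex-combine the corresponding maximizers and use concavity of $E_{\theta_j}$), so $g(t)=F(\phi+tv)$ is concave and its one-sided derivatives $g'(0^\pm)$ exist with $g'(0^+)\le g'(0^-)$. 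The whole proof then reduces to showing that the superdifferential of $F$ at $\phi$ is the single measure $\mu_{eq}$, i.e. that $g'(0^+)=g'(0^-)=\int_X v\,\mu_{eq}$. Throughout I use that a vector of extremal potentials is precisely a maximizer of $f_\phi$ at which the supremum term equals zero (the variational characterization underlying Theorem~\ref{thm:ExistUnique}).

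The key step is to verify that $\mu_{eq}$ is a supergradient of $F$ at $\phi$. Let $(\phi_1,\dots,\phi_m)$ be extremal potentials for $\phi$ and $(\phi_1',\dots,\phi_m')$ extremal potentials for an arbitrary $\psi\in C(X)$, so that $F(\phi)=\sum_j E_{\theta_j}(\phi_j)/\int_X\theta_j^n$ and likewise for $\psi$. Since the differential of $E_{\theta_j}/\int_X\theta_j^n$ at $\phi_j$ is the common normalized Monge--Ampère measure appearing in \eqref{eq:EqPot1}, which is $\mu_{eq}$, concavity of the energy gives
\[ \frac{E_{\theta_j}(\phi_j')}{\int_X\theta_j^n}\le \frac{E_{\theta_j}(\phi_j)}{\int_X\theta_j^n}+\int_X(\phi_j'-\phi_j)\,\mu_{eq}. \]
Summing over $j$, and using $\sum_j\phi_j=\phi$ on $\supp\mu_{eq}$ by \eqref{eq:EqPot3} together with $\sum_j\phi_j'\le\psi$ everywhere by \eqref{eq:EqPot2} and $\mu_{eq}\ge0$, the cross terms collapse and yield the supergradient inequality $F(\psi)\le F(\phi)+\int_X(\psi-\phi)\,\mu_{eq}$, valid for every $\psi\in C(X)$.

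Applying this inequality at $\phi$ with $\psi=\phi+tv$ gives $g(t)-g(0)\le t\int_X v\,\mu_{eq}$, while applying it at the perturbed data $\phi+tv$ with $\psi=\phi$ gives $g(t)-g(0)\ge t\int_X v\,\mu_{eq}(\phi+tv)$, where $\mu_{eq}(\phi+tv)$ is the equilibrium measure of the perturbed data. Dividing by $t$ (with the appropriate sign for each side) therefore sandwiches the difference quotient $(g(t)-g(0))/t$ between $\int_X v\,\mu_{eq}(\phi+tv)$ and $\int_X v\,\mu_{eq}$. Consequently $g'(0)$ exists and equals $\int_X v\,\mu_{eq}$ as soon as $\int_X v\,\mu_{eq}(\phi+tv)\to\int_X v\,\mu_{eq}$ as $t\to0$, i.e. as soon as the equilibrium measure is weak-$*$ continuous along $t\mapsto\phi+tv$.

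The main obstacle is exactly this continuity, which I would establish by compactness and uniqueness rather than by proving continuity of the potentials directly. As $t\to0$ the data $\phi+tv$ converges uniformly to $\phi$; after normalizing the maximizers $(\phi_j^t)$ (say by $\sup_X\phi_j^t=0$), coercivity of $f_{\phi+tv}$ and uniform energy bounds yield relative compactness in $\prod_j\mathcal E_1(\theta_j)$. Any cluster point is a maximizer of $f_\phi$, by upper semicontinuity of the energy and closedness of the constraint $\sum_j\phi_j\le\phi$, hence a vector of extremal potentials for $\phi$; by the uniqueness part of Theorem~\ref{thm:ExistUnique} the currents $\theta_j+dd^c\phi_j^t$, and therefore the measures $\mu_{eq}(\phi+tv)$, converge to those of $\phi$ along every subsequence, which forces the full weak-$*$ limit. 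The delicate point is upgrading weak convergence of the potentials to convergence of the non-pluripolar Monge--Ampère measures, which requires convergence in energy rather than merely in $L^1$; this is where the finite-energy formalism of \cite{BBGZ} and the uniqueness of the currents in Theorem~\ref{thm:ExistUnique} carry the argument.
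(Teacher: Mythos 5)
Your supergradient inequality is correct, and it is a genuinely different mechanism from the paper's. The paper also starts from concavity of $g$, but it obtains its one-sided bounds by evaluating $f_{\phi+tv}$ at the explicit competitor $(\phi_1+tv,\phi_2,\ldots,\phi_m)$: because the weight and the first potential are perturbed by the \emph{same} function $tv$, the term $\sup_X\bigl(\sum_j \phi_j-\phi\bigr)$ is unchanged, so
$$ g(t)-g(0)\;\geq\; \frac{E_{\theta_1}(\phi_1+tv)-E_{\theta_1}(\phi_1)}{\int_X\theta_1^n}, $$
and differentiating the energy gives $g'(0^+)\geq \int_X v\,\mu_{eq}$ and $g'(0^-)\leq \int_X v\,\mu_{eq}$. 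These are exactly the bounds that concavity can close: together with $g'(0^-)\geq g'(0^+)$ they squeeze everything to equality, and no stability analysis is needed. Your inequality $F(\psi)\leq F(\phi)+\int_X(\psi-\phi)\,\mu_{eq}$ produces the bounds in the \emph{opposite} direction, $g'(0^+)\leq \int_X v\,\mu_{eq}\leq g'(0^-)$, which only says that $\mu_{eq}$ lies in the superdifferential of $g$ at $0$; a concave function can have many supergradients, so this alone cannot yield differentiability. That is why your route forces you to prove weak-$*$ continuity of $t\mapsto \mu_{eq}(\phi+tv)$ — a step the paper's choice of competitor avoids altogether.

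That continuity step, as you wrote it, contains a genuine gap. The inference ``normalized maximizers are $L^1$-precompact, every cluster point is the maximizer of $f_\phi$ by uniqueness, therefore the currents $\theta_j+dd^c\phi_j^t$ and the measures $\mu_{eq}(\phi+tv)$ converge'' is invalid: the non-pluripolar Monge--Amp\`ere operator is not continuous along $L^1$ convergence in $\mathcal E_1(\theta_j)$, so uniqueness of the $L^1$ limit of the potentials says nothing by itself about limits of the measures. You flag this yourself in the last sentence, but gesturing at the finite-energy formalism of \cite{BBGZ} is not yet an argument. The missing idea is that the variational structure upgrades the convergence: since $|f_{\phi+tv}-f_\phi|\leq |t|\sup_X|v|$ uniformly on $\prod_{j=1}^m\mathcal E_1(\theta_j)$, the maximizers of $f_{\phi+tv}$ form a maximizing family for $f_\phi$; then, exactly as in the paper's proof of Theorem~\ref{thm:Convergence}, Part~(i), convergence of the \emph{sum} of the energies combined with upper semicontinuity of each $E_{\theta_j}$ forces $E_{\theta_j}(\phi_j^t)\to E_{\theta_j}(\phi_j)$ for every $j$, i.e.\ convergence in energy of each component; continuity of the non-pluripolar Monge--Amp\`ere operator along convergence in energy (a theorem in the $\mathcal E_1$ framework, not merely a formalism) then gives $\int_X v\,\mu_{eq}(\phi+tv)\to\int_X v\,\mu_{eq}$. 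With that insertion your proof closes; without it, the final limit in your sandwich is unsupported.
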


One application of Definition~\ref{def:extpot}, Definition~\ref{def:EqMeas} and the theorems following them is \cite{H21} and the questions related to sampling of holomorphic sections of line bundles and Fekete points studied therein. The extremal potentials and equilibrium measure defined here arise naturally when trying to find point configurations that have good sampling properties with respect to several different line bundles simultaneously. 

Theorem~\ref{thm:ExistUnique} will be proved with a variational argument exploiting the fact that solutions to \eqref{eq:EqPot1}-\eqref{eq:EqPot3} arise as minimizers of $f_\phi$. The proof of Theorem~\ref{thm:Regularity} is inspired by the ''thermodynamic'' regularisation scheme in \cite{B}. For a fixed Kähler metric $\omega_0$ on $X$ of unit volume, we will consider the family of equations 
\begin{equation}
    \frac{(\theta_1+dd^c \phi_1)^n}{\int_X \theta_1^n} = \ldots = \frac{(\theta_m+dd^c\phi_m)^n}{\int_X\theta_m^n} = e^{\beta(\sum_{j=1}^m \phi_j-\phi)}\omega_0^n. 
    \label{eq:Beta}
\end{equation}
where $\beta>0$. Using the same techniques as \cite{HWN} we will show that \eqref{eq:Beta} admits a unique solution $(\phi_1^\beta,\ldots,\phi_m^\beta)$ satisfying the normalizing condition
\begin{equation} \sup_X \phi_2^\beta = \ldots = \sup_X \phi_m^\beta = 0 \label{eq:normalization} \end{equation}
and that this solution is smooth if $\phi$ is smooth and continuous if $\phi$ is continuous (see Theorem~\ref{thm:BetaTemp} below). The proof of Theorem~\ref{thm:Regularity} then boils down to proving that this solution converges in a sufficiently strong manner to a solution of \eqref{eq:EqPot1}-\eqref{eq:EqPot3} as $\beta\rightarrow \infty$. More precisely, we will prove that the convergence is uniform if the Monge-Ampère mass of $\mathcal P_{\sum_{j=1}
^m \theta_j}(\phi)$ is absolutely continuous with bounded density and that the convergence holds in $C^{1,\alpha}$ with a uniform Laplacian bound if $\phi$ is smooth. 
The first of these statements will be proved by first establishing convergence in energy and a uniform upper bound on the quantity $\beta(\sum_{j=1}
^m \phi_j^\beta-\phi)$. As convergence in energy implies convergence in capacity, uniform convergence then follows from Ko\l odziej's capacity estimates for Monge-Ampère masses with density bounded in $L^p$. The higher order convergence when $\phi$ is smooth will follow as in \cite{B} from Siu's variant of the Aubin-Yau Laplacian estimates.

Theorem~\ref{thm:Differentiability} is ultimately a consequence of Theorem~\ref{thm:ExistUnique}, \eqref{eq:EnergyDifferentiability} and various convexity arguments. Formally, Theorem~\ref{thm:Differentiability} can also be seen as a consequence of general properties of Legendre transform and infimal convolution (see Remark~\ref{rem:Diff} at the end of Section~\ref{sec:Variational}). 

In the next section we will set up the variational framework and prove Theorem~\ref{thm:ExistUnique} and Theorem~\ref{thm:Differentiability}. Section~\ref{sec:Therm} is dedicated to the thermodynamic regularization scheme. We prove existence, uniqueness and regularity of solutions to \eqref{eq:Beta} for any $\beta>0$. Section~\ref{sec:Convergence} address the mode of convergence as $\beta\rightarrow \infty$ and prove Theorem~\ref{thm:Regularity}.



\paragraph{Acknowledgments}
The author would like to thank Nick McCleerey and Támas Darvas for very valuable input and for reading and commenting on a draft of this paper. Special thanks to Nick McCleerey for spotting an error in the proof of Lemma~1. The author also thanks the Knut and Alice Wallenberg Foundation for financial support. 

\section{Variational framework}
\label{sec:Variational}
For a Kähler form $\theta$, let $\PSH(\theta)$ be the space of $\omega$-plurisubharmonic functions on $X$, i.e. upper semi-continuous functions $\phi$ such that $dd^c \phi + \theta\geq 0$ in the sense of distributions. As in the introduction, let $E_{\theta}$ be the Monge-Ampère energy and $\mathcal E_1(\theta)$ be the associated finite energy space. In other words, $E_{\theta}$ is the unique upper semi-continuous function on $\PSH(\theta)$ such that
$$ E_{\theta}(\phi) = \sum_{l=0}^n \int_X \phi (\theta+dd^c \phi)^l\theta^{n-l} $$
whenever $\phi$ is twice differentiable and $\mathcal E_1(\theta)$ the subset of $\PSH(\theta)$ where $E_{\theta}>-\infty$ (see \cite{BBGZ}). Moreover, let $\mathcal P_\theta$ be the projection operator from the space of continuous functions on $X$ to $\PSH(\theta)$, that is the map taking a continuous function $\phi$ to the envelope $\mathcal P_\theta(\phi)$. More precisely, for $x\in X$
$$ \mathcal P(\phi)(x) = \sup \left\{\psi(x): \psi\in \PSH(\theta), \; \psi\leq \phi \right\}. $$

See \cite{BBGZ} for a background on $E_\theta$, $\mathcal E_1(\theta)$ and $\mathcal P_\theta$. For us, their main significance is given by the differentiability property \eqref{eq:EnergyDifferentiability}. 
We also note that $E_\theta$ is concave and proper on $\mathcal E_1(\theta)$, in other words the sets
$$ \left\{ \phi\in \mathcal E_1(\theta): E_\theta(\phi)>-C \right\}, C\in \mathbb R $$ 
are compact in the $L^1$-topology (see \cite{BBGZ}). 

The proof of Theorem~\ref{thm:ExistUnique} is essentially given by the following lemma. 
\begin{lemma}
\label{lem:SProperties}
    Then the following holds:
    \begin{enumerate}[(i)]
        \item The functional $f_\phi$ admits a maximizer. \label{ItemExistence}
        \item \label{ItemUniqueness} If $(\phi_1,\ldots,\phi_m)$ and $(\phi'_1,\ldots,\phi'_m)$ are two maximizers of $f_\phi$ 
    then $\phi_j - \phi_j'$ is constant for each $j\in \{1,\ldots,m\}$. 
        \item \label{ItemEquivalence} An element $(\phi_1,\ldots,\phi_m)\in \prod_{j=1}^m \mathcal E_1(\theta_i)$ is a maximizer of $f_\phi$ if and only if $(\phi_1,\ldots,\phi_m-C)$ solves \eqref{eq:EqPot1}-\eqref{eq:EqPot3}, where $C=\sup(\sum\phi_j-\phi)$.
    \end{enumerate}
\end{lemma}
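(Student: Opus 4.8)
The plan is to treat the three parts as facets of the single variational problem of maximizing the concave functional $f_\phi$ in \eqref{eq:f}, exploiting throughout that $f_\phi$ is invariant under $(\phi_1,\ldots,\phi_m)\mapsto(\phi_1+c_1,\ldots,\phi_m+c_m)$: adding a constant $c_j$ changes $E_{\theta_j}(\phi_j)$ by $c_j\int_X\theta_j^n$, so the $c_j$-contributions to the two terms of $f_\phi$ cancel. Concavity of $f_\phi$ is immediate, since each $E_{\theta_j}$ is concave on $\mathcal E_1(\theta_j)$ while $(\phi_1,\ldots,\phi_m)\mapsto\sup_X(\sum\phi_j-\phi)$ is convex, being a supremum of functionals affine in the tuple.

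For (i) I would run the direct method. Normalizing $\sup_X\phi_j=0$ for every $j$ to break the translation invariance, I take a maximizing sequence. Upper semicontinuity of $f_\phi$ in the $L^1$-topology follows from upper semicontinuity of each $E_{\theta_j}$ together with the fact that $\sup_X$ is lower semicontinuous along $L^1$-convergent sequences of $\theta$-plurisubharmonic functions (an $L^1$-limit inherits the pointwise upper bound, and for such functions $\sup_X$ equals the essential supremum). A limit of a relatively compact maximizing sequence is then a maximizer; relative compactness in turn reduces, by properness of $E_{\theta_j}$, to a lower bound on each $E_{\theta_j}(\phi_j)$. The main obstacle, and the real content of (i), is exactly this a priori bound: one must exclude the degeneration in which each $\phi_j$ keeps $\sup_X\phi_j=0$ while $\sum\phi_j$ drifts to $-\infty$, so that the gain $-\sup_X(\sum\phi_j-\phi)$ could offset an unbounded loss in $\sum_j E_{\theta_j}(\phi_j)/\int_X\theta_j^n$. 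I expect to control this by comparison with the single-class problem: since $\sum\phi_j\in\PSH(\sum_j\theta_j)$ and $\sum\phi_j\le\phi$ after normalizing the diagonal, the convexity estimate $\sum_j E_{\theta_j}(\phi_j)/\int_X\theta_j^n\le E_{\sum\theta_j}(\sum\phi_j)/\int_X(\sum\theta_j)^n$ (an equality for proportional splittings) bounds $f_\phi$ above by the classical $m=1$ functional evaluated at $\sum\phi_j$, whose maximum is $E_{\sum\theta_j}(\mathcal P_{\sum\theta_j}(\phi))/\int_X(\sum\theta_j)^n$; the classical coercivity then feeds back to bound the energies $E_{\theta_j}(\phi_j)$ from below. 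Establishing this estimate, and with it boundedness and coercivity, is the step I expect to require the most care.

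Part (ii) is then a soft consequence of concavity. If $\Phi,\Phi'$ both maximize, then $f_\phi$ is affine along the segment $\Phi_t=(1-t)\Phi+t\Phi'$; being a sum of concave terms that is affine, each term must be affine, so $t\mapsto E_{\theta_j}((1-t)\phi_j+t\phi_j')$ is affine for every $j$. The strict concavity of the Monge--Ampère energy modulo constants (along a linear path $\frac{d^2}{dt^2}E_{\theta_j}=-n\int_X d\dot\phi\wedge d^c\dot\phi\wedge(\theta_j+dd^c\phi_t)^{n-1}\le0$, with equality only when $\dot\phi$ is constant) then forces $\phi_j-\phi_j'$ to be constant.

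For the equivalence (iii) I would compute Euler--Lagrange conditions, using that a concave functional is maximized exactly where all one-sided directional derivatives into the domain are nonpositive. The delicate points are the non-smooth term $\sup_X(\sum\phi_j-\phi)$, whose one-sided derivative is given by Danskin's formula as the supremum of the perturbation over the contact set $K=\{\sum\phi_j-\phi=\sup_X(\sum\phi_j-\phi)\}$, and the requirement that competitors stay $\theta$-plurisubharmonic. I would sidestep admissibility by testing with projected variations: for $v\in C(X)$ replace $\phi_k$ by $\mathcal P_{\theta_k}(\phi_k+tv)\in\mathcal E_1(\theta_k)$, which by \eqref{eq:EnergyDifferentiability} contributes $t\int_X v\,(\theta_k+dd^c\phi_k)^n+o(t)$ to the energy while obeying $\mathcal P_{\theta_k}(\phi_k+tv)\le\phi_k+tv$. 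Perturbing $\phi_k$ up by $tv$ and $\phi_l$ down by $tv$ keeps $\sum\phi_j$ pointwise nonincreasing, so the sup term does not increase, and maximality yields $\int_X v\,(\theta_k+dd^c\phi_k)^n/\int_X\theta_k^n\le\int_X v\,(\theta_l+dd^c\phi_l)^n/\int_X\theta_l^n$; replacing $v$ by $-v$ promotes this to the equality \eqref{eq:EqPot1}. A single upward projected variation gives $\int_X v\,d\mu\le\sup_K v$ for the common normalized measure $\mu$ and all $v$, forcing $\supp\mu\subseteq K$; since $\sum\phi_j=\phi+C$ on $K$ with $C=\sup_X(\sum\phi_j-\phi)$, subtracting $C$ from $\phi_m$ turns these into exactly \eqref{eq:EqPot2}--\eqref{eq:EqPot3}. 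The converse is cleaner: if the tuple solves \eqref{eq:EqPot1}--\eqref{eq:EqPot3}, then for any competitor the directional derivative of $f_\phi$ equals $\int_X(\sum\psi_j-\phi)\,d\mu-\sup_K(\sum\psi_j-\phi)$, which is nonpositive because $\mu$ is a probability measure supported in $K$, and concavity upgrades this to global maximality.
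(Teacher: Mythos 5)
The genuine gap is in part (i), at exactly the step you yourself flag as the real content. The estimate you propose,
\begin{equation*}
\sum_{j=1}^m \frac{E_{\theta_j}(\phi_j)}{\int_X\theta_j^n}\;\le\; \frac{E_{\sum_j\theta_j}\bigl(\sum_j\phi_j\bigr)}{\int_X(\sum_j\theta_j)^n},
\end{equation*}
is false in general. Take $\phi_2=\cdots=\phi_m=0$ and $\phi_1=\epsilon\psi$ with $\psi$ smooth and $\epsilon>0$ small (so that $\epsilon\psi\in\PSH(\theta_1)$). Expanding both sides to first order in $\epsilon$, the right side minus the left side is a positive multiple of
$\epsilon\int_X\psi\,\bigl((\sum_j\theta_j)^n/\int_X(\sum_j\theta_j)^n-\theta_1^n/\int_X\theta_1^n\bigr)$
plus $O(\epsilon^2)$. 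Unless the two probability measures $(\sum_j\theta_j)^n/\int_X(\sum_j\theta_j)^n$ and $\theta_1^n/\int_X\theta_1^n$ coincide --- which fails for generic collections of Kähler forms --- one can choose $\psi$ making this first-order term negative, so the inequality fails for small $\epsilon$. (The first-order obstruction vanishes precisely for proportional splittings $\theta_j=c_j\theta$, which is presumably the case that suggested the estimate.) Moreover, even granting the estimate, the ``feedback'' step does not close the argument: classical coercivity applied to $\sum_j\phi_j$ only controls the energy of the sum normalized by \emph{its own} supremum; it neither prevents $\sup_X\sum_j\phi_j\to-\infty$ along your sup-normalized maximizing sequence, nor yields lower bounds on the individual energies $E_{\theta_j}(\phi_j)$ (your inequality points the wrong way for that). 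The degeneration you worry about is in fact excluded by an elementary argument, and this is precisely the paper's inequality \eqref{eq:CoerLowerBnd}: under $\sup_X\phi_j=0$, compactness of sup-normalized $\theta_j$-plurisubharmonic functions (the submean value property) gives $\int_X\phi_j\,\omega_0^n\ge -C_j$, hence $\sup_X(\sum_j\phi_j-\phi)\ge\int_X(\sum_j\phi_j-\phi)\,\omega_0^n\ge -C$; since each $E_{\theta_j}(\phi_j)\le0$ under your normalization, this bounds each energy below along the maximizing sequence, and properness plus semicontinuity finish exactly as you describe. With this replacement your part (i) is correct, and no comparison with the single-class problem is needed.

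Your parts (ii) and (iii) are essentially correct, and (iii) follows the same route as the paper: projected variations $\mathcal P_{\theta_k}(\phi_k+tv)$ combined with \eqref{eq:EnergyDifferentiability}, the Danskin-type formula for the sup term, and the probability-measure argument for the converse are precisely the paper's Euler--Lagrange computation. In (ii) you replace the paper's argument --- equality of the measures $(\theta_j+dd^c\phi_j)^n=(\theta_j+dd^c\phi_j')^n$ obtained from differentiability of $E_{\theta_j}\circ\mathcal P_{\theta_j}$, followed by Theorem~B of \cite{GZ} --- with a direct appeal to strict concavity of $E_{\theta_j}$ modulo constants. Be aware that the equality case of that statement for potentials merely in $\mathcal E_1(\theta_j)$ is not a formal second-derivative computation (the currents involved are singular and the integration by parts needs justification); it is essentially the quoted uniqueness theorem in disguise, so you should cite it rather than rederive it.
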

\begin{proof}
We begin with Part \eqref{ItemExistence}.
\paragraph{Part \eqref{ItemExistence} (existence)}
Assume $\{(\phi_1^i,\ldots,\phi_m^i)\}_{i=1}^\infty$ is a sequence such that $f_\phi(\phi_1^i,\ldots,\phi_m^i)$ is increasing and
$$ f_\phi(\phi_1^i,\ldots,\phi_m^i) \rightarrow \sup f_\phi $$
as $i\rightarrow \infty$. By invariance of $f_\phi$ we may assume 
$$ \sup_X \phi^i_1 = \ldots = \sup_X \phi^i_m = 0. $$
This means 
\begin{equation} \label{eq:NegativeEnergy} E(\phi_j^i)\leq 0 \end{equation}
for each $j\in \{1,\ldots,m\}$. 
Moreover, by the submeanvalue property there are constants $C_1,\ldots,C_m$ such that 
$$ \int_X \phi_j \omega_0^n \geq - C_j $$
for each $j\in \{1,\ldots,m\}$ and any $\theta_j$-plurisubharmonic function $\phi_j$. We get that 
\begin{equation}
    \sup_X \left(\sum_{j=1}^m \phi_j^i - \phi\right) \geq \int_X \sum_{j=1}^m \left(\phi_j^i - \phi\right)\omega_0^n \geq -\sum_{j=1}^m C_j -\int_X \phi\omega_0^n. \label{eq:CoerLowerBnd}
\end{equation} 
Together with \eqref{eq:NegativeEnergy}, this means 
\begin{eqnarray}
E_{\theta_j}(\phi_j^i) & \geq & f_\phi(\phi_1^i,\ldots,\phi_m^i) + \sup_X \left(\sum_{j=1}^m \phi_j^i - \phi\right) \nonumber \\
& \geq & f_\phi(\phi_1^i,\ldots,\phi_m^i) -\sum_{j=1}^m C_j -\int_X \phi\omega_0^n. \nonumber
\end{eqnarray}
By properness of $E_{\theta_j}$ we may, after possibly passing to a subsequence, assume $\phi_j^i$ converges to some $\phi_j$ as $i\rightarrow \infty$. By $L^1$-continuity of $\sup(\sum_{i=1}^m \phi_j^i-\phi)$ in $(\phi_1^i,\ldots,\phi_m^i)$ and lower semi-continuity of $E_{\theta_j}$ for each $j\in \{1,\ldots,m\}$ we get that 
$$ f_\phi(\phi_1,\ldots,\phi_m) \geq \lim_{i\rightarrow \infty} f_\phi(\phi_1^i,\ldots,\phi_m^i) = \sup f_\phi, $$
hence $(\phi_1,\ldots,\phi_m)$ is a maximizer of $f_\phi$. 

\paragraph{Part \eqref{ItemUniqueness} (uniqueness)}
By concavity of $f_\phi$ we get that 
$$ f_\phi(t\phi_1+(1-t)\phi_1',\ldots,t\phi_m+(1-t)\phi_m') $$
is constant in $t$. By concavity of $E_{\theta_j}$ for each $j\in \{1,\ldots,m\}$ and concavity of $-\sup \sum \phi_j - \phi$, it follows that $E_{\theta_j}(t\phi_j+(1-t)\phi_j')$ is affine in $t$ for each $j\in \{1,\ldots,m\}$. 
Let $v\in C(X)$ and $g(t,s)$ be the differentiable concave function
$$ g(t,s) = E_{\theta_j}\circ \mathcal P_{\theta_j}(t\phi_j+(1-t)\phi_j'+sv). $$
Since $g$ is affine on $\mathbb R\times\{0\}$ basic properties of differentiable concave functions implies $\frac{d}{ds} g(0,0) = \frac{d}{ds}g(1,0)$ and hence
$ \int_X v (\theta_j+dd^c\phi_j)^n = \int_X v (\theta_j+dd^c\phi_j')^n $. Since $v$ is an arbitrary continuous function, this means
$(\theta_j+dd^c\phi_j)^n = (\theta_j+dd^c\phi_j')^n$. By Theorem~B in \cite{GZ}, this implies $\phi_j-\phi_j'$ is constant. 

\paragraph{Part \eqref{ItemEquivalence} (Euler-Lagrange equations)}
Assume that $(\phi_1,\ldots,\phi_m)$ is a maximizer of $f_\phi$ and consider the function on $(C(X))^m$ given by
\begin{equation} \label{eq:fhat} \hat f(v_1,\ldots,v_m) = \sum_{j=1}^m \frac{E_{\theta_j}(\mathcal P_{\theta_j}(\phi_j+tv_j))}{\int_X \theta_j^n} - \sup_X \left(\sum_{j=1}^m (\phi_j+tv_j) - \phi\right). \end{equation}
Clearly, since $\mathcal P_{\theta_j}(\phi_j+v_j)\leq \phi_j+v_j$ we have that 
\begin{eqnarray}
\hat f(v_1,\ldots,v_m) & \leq & f_\phi(\mathcal P_1(\phi_1+v_1),\ldots,\mathcal P_m(\phi_m+v_m)) \nonumber \\
& \leq & f_\phi(\phi_1,\ldots,\phi_m) \nonumber \\
& = & \hat f(0,\ldots,0) \nonumber
\end{eqnarray} 
for all $(v_1,\ldots,v_m)\in (C(X))^m$. In other words, $(0,\ldots,0)$ is a maximal point of $\hat f$. Let $v\in C(X)$. By \eqref{eq:EnergyDifferentiability}, the derivative of $\hat f$ in the direction given by $(v,-v,0\ldots,0)$ at $(0,\ldots,0)$ is 
$$ \int_X v\frac{(\theta_1+dd^c \phi_1)^n}{\int_X\theta_1^n} - \int_X v\frac{(\theta_2+dd^c \phi_2)^n}{\int_X\theta_2^n}. $$
Since this must vanish for every $v\in C(X)$ we get
$$
\frac{(\theta_1+dd^c \phi_1)^n}{\int_X\theta_1^n} = \frac{(\theta_2+dd^c \phi_2)^n}{\int_X\theta_2^n} 
$$
and a similar argument gives 
$$ 
\frac{(\theta_1+dd^c \phi_1)^n}{\int_X\theta_1^n} = \frac{(\theta_j+dd^c \phi_j)^n}{\int_X\theta_j^n} 
$$ 
for any $j\in \{1,\ldots,m\}$. 

Now, by invariance of $\hat f$, we may replace $(\phi_1,\ldots,\phi_m)$ by
$$
(\phi_1,\ldots,\phi_m-\sup_X(\sum_{j=1}^m \phi_j-\phi))
$$
to get $\sup_X \left(\sum_{j=1}^m \phi_j-\phi\right) = 0$. Let
$v$ be a positive continuous function supported on the set $\{\sum_{j=1}^m \phi_j < \phi \}$ and note that
$$ \left.\frac{d}{dt_+}\sup_X \left(\sum_{j=1}^m \phi_j+tv-\phi\right)\right|_{t=0} = \sup_{\{\sum_{j=1}^m \phi_j = \phi\}} v = 0 $$
since $\sum_{j=1}^m \phi_j - \phi$ is upper semi-continuous and $\{\sum_{j=1}^m \phi_j = \phi\}$ is precisely the subset of $X$ where 
$$ \sum_{j=1}^m \phi_j-\phi = \sup_X \left(\sum_{j=1}^m \phi_j-\phi\right).$$
%
It follows that the one sided directional derivative of $\hat f$ in the direction given by $(v,0,\ldots,0)$ at $(0,\ldots,0)$ is 
\begin{equation} \label{eq:vomega} \int_X v (\theta_1+dd^c \phi_1)^n. \end{equation} 
Since $v$ is non-negative and this has to be non-positive, we get that \eqref{eq:vomega} vanish for any positive continuous $v$ supported on $\{\sum_{j=1}^m \phi_j < \phi \}$. It follows that $\sum_{j=1}^m \phi_j = \phi$ on the support of $(\theta_1+dd^c \phi_1)^n$. This proves one direction of the statement. 

For the other direction, assume $(\phi_1,\ldots,\phi_m)$ satisfies \eqref{eq:EqPot1}-\eqref{eq:EqPot3} and let $\hat f$ be defined by \eqref{eq:fhat}. Let $(v_1,\ldots,v_m)\in C(X)^m$. 
and
$$ g(t) = \hat f(tv_1,\ldots,tv_m). $$
Since $f_\phi$ is concave, $g$ is concave and the one-sided derivative 
\begin{equation}
\label{eq:RightDerr}
\left.\frac{d}{dt_+}g\right|_{t=0}=\lim_{t\rightarrow 0^+}  \frac{g(t)-g(0)}{t}
\end{equation}
exist. Moreover, $(\phi_1,\ldots,\phi_m)$ is a maximizer of $f_\phi$ if \eqref{eq:RightDerr} is non-positive for 
all $(v_1,\ldots,v_m)\in C(X)$. 

As above, note that
$$ \left.\frac{d}{dt_+}\sup_X \left(\sum_{j=1}^m (\phi_j+tv_j)-\phi\right)\right|_{t=0} = \sup_{\{\sum_{j=1}^m \phi_j = \phi\}} \sum_{j=1}^m v_j. $$
By \eqref{eq:EqPot3}, $\supp (\theta_1+dd^c \phi_1)^n \subset \{\sum_{j=1}^m \phi_j = \phi\}$. Together with \eqref{eq:EqPot1}, this gives
\begin{eqnarray}
\left.\frac{d}{dt_+}g\right|_{t=0} & = & \sum_{j=1}^m \int_X v_j\frac{(\theta_j+dd^c \phi_j)^n}{\int_X \theta_j^n} - \sup_{\{\sum_{j=1}^m \phi_j = \phi\}} \sum_{j=1}^m v_j \nonumber \\
& = & \int_X \left(\sum_{j=1}^m v_j\right)\frac{(\theta_1+dd^c \phi_1)^n}{\int_X \theta_1^n} - \sup_{\supp (\theta_1+ dd^c \phi_1)^n} \sum_{j=1}^m v_j \nonumber \\
& \leq & 0 \nonumber
\end{eqnarray}
where the last inequality uses that the mass of $(\theta_1+dd^c\phi_1)^n/\int_X \theta_1^n$ is 1. 
\end{proof}

\begin{proof}[Proof of Theorem~\ref{thm:ExistUnique}]
By the first point in Lemma~\ref{lem:SProperties}, $f_\phi$ admits a maximizer. By the third point in Lemma~\ref{lem:SProperties}, this maximizer is a solution of \eqref{eq:EqPot1}-\eqref{eq:EqPot3}. Moreover, if $(\phi_1,\ldots,\phi_m)$ and $(\phi'_1,\ldots,\phi'_m)$ are two solutions to \eqref{eq:EqPot1}-\eqref{eq:EqPot3}, then
\begin{equation} \sup \left(\sum_{j=1}^m \phi_j-\phi\right) = \sup \left(\sum_{j=1}^m \phi'_j-\phi'\right) = 0. \label{eq:SupCoincide} \end{equation}
Hence, by the third point in Lemma~\ref{lem:SProperties}, they are also maximizers of $f_\phi$. By the second point in Lemma~\ref{lem:SProperties}, $\phi_j-\phi_j'=C_j$ for some constants $C_1,\ldots,C_m$. By \eqref{eq:SupCoincide}, $\sum_{j=1}^m C_j = 0$ and hence $\sum \phi_j = \sum \phi_j'$. 
\end{proof}

Before proceeding to the proof of Theorem~\ref{thm:Differentiability}, we will prove the following lemma explaining the relationship of \eqref{eq:EqPot1}-\eqref{eq:EqPot3} and $F$ to the classical envelope construction and the Monge-Ampère energy. 
\begin{lemma}
\label{lem:m1}
Assume $m=1$, then 
\begin{equation} \label{eq:m1Lemma} F=\frac{\mathcal E_{\theta_1}\circ\mathcal P_{\theta_1}}{\int_X \theta_1^n} \end{equation}
and $\phi_1$ satisfies 
\eqref{eq:EqPot1}-\eqref{eq:EqPot3} if and only if 
$\phi_1=\mathcal P_{\theta_1}(\phi)$.
\end{lemma}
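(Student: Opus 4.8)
Since $m=1$, the functional \eqref{eq:f} reduces to $f_\phi(\phi_1)=E_{\theta_1}(\phi_1)/\int_X\theta_1^n-\sup_X(\phi_1-\phi)$, and the chain of equalities \eqref{eq:EqPot1} is vacuous. Thus a solution of \eqref{eq:EqPot1}--\eqref{eq:EqPot3} is simply a function $\phi_1\in\mathcal E_1(\theta_1)$ with $\phi_1\le\phi$ on $X$ and $\phi_1=\phi$ on $\supp(\theta_1+dd^c\phi_1)^n$. I would establish the two assertions in turn, proving the formula \eqref{eq:m1Lemma} first.

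To compute $F$, the plan is to bound $f_\phi$ from above by $E_{\theta_1}(\mathcal P_{\theta_1}(\phi))/\int_X\theta_1^n$ and then exhibit $\phi_1=\mathcal P_{\theta_1}(\phi)$ as a point where this value is attained. For the upper bound, fix $\phi_1\in\mathcal E_1(\theta_1)$ and set $c=\sup_X(\phi_1-\phi)$, which is finite since $\phi_1$ is bounded above (submean value property) and $\phi$ is continuous. Then $\phi_1-c\le\phi$ and $\phi_1-c\in\PSH(\theta_1)$, so by definition of the envelope $\phi_1-c\le\mathcal P_{\theta_1}(\phi)$. Using monotonicity of $E_{\theta_1}$ together with the translation identity $E_{\theta_1}(\psi+c)=E_{\theta_1}(\psi)+c\int_X\theta_1^n$ (both standard, \cite{BBGZ}), I get
$$ f_\phi(\phi_1)=\frac{E_{\theta_1}(\phi_1)}{\int_X\theta_1^n}-c=\frac{E_{\theta_1}(\phi_1-c)}{\int_X\theta_1^n}\le\frac{E_{\theta_1}(\mathcal P_{\theta_1}(\phi))}{\int_X\theta_1^n}. $$
For attainment, I would note that $\mathcal P_{\theta_1}(\phi)\le\phi$ forces $\sup_X(\mathcal P_{\theta_1}(\phi)-\phi)\le0$, so $f_\phi(\mathcal P_{\theta_1}(\phi))\ge E_{\theta_1}(\mathcal P_{\theta_1}(\phi))/\int_X\theta_1^n$ (here $\mathcal P_{\theta_1}(\phi)\in\mathcal E_1(\theta_1)$ because it is bounded). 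Combining the two inequalities gives \eqref{eq:m1Lemma}, and incidentally shows the supremum defining $F$ is attained at $\mathcal P_{\theta_1}(\phi)$.

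For the equivalence, the implication $\phi_1=\mathcal P_{\theta_1}(\phi)\Rightarrow$\eqref{eq:EqPot2}--\eqref{eq:EqPot3} is where the one genuine analytic input enters: \eqref{eq:EqPot2} is immediate from $\mathcal P_{\theta_1}(\phi)\le\phi$, while \eqref{eq:EqPot3} is the classical orthogonality relation asserting that the non-pluripolar measure $(\theta_1+dd^c\mathcal P_{\theta_1}(\phi))^n$ is concentrated on the contact set $\{\mathcal P_{\theta_1}(\phi)=\phi\}$, which I would cite from \cite{BB} (see also \cite{BBWN}). For the converse, suppose $\phi_1$ solves \eqref{eq:EqPot1}--\eqref{eq:EqPot3}. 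By the direction just proved, $\mathcal P_{\theta_1}(\phi)$ is also a solution, so the uniqueness part of Theorem~\ref{thm:ExistUnique} applies: the two solutions differ by a constant and have equal sums. Since $m=1$ the sum is the function itself, forcing $\phi_1=\mathcal P_{\theta_1}(\phi)$.

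I expect the substance of the argument to lie entirely in the orthogonality relation invoked for \eqref{eq:EqPot3}; the rest is bookkeeping with the monotonicity, translation-invariance and properness of $E_{\theta_1}$ recorded above, plus the uniqueness already furnished by Theorem~\ref{thm:ExistUnique}. As a consistency check I would confirm there is no circularity: Theorem~\ref{thm:ExistUnique} and Lemma~\ref{lem:SProperties} are proved before this lemma and make no use of the $m=1$ identification, so invoking them here is legitimate.
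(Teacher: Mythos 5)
Your proof is correct. The first half (the identity \eqref{eq:m1Lemma}) is essentially the paper's own argument: both of you show that $\mathcal P_{\theta_1}(\phi)$ maximizes $f_\phi$ by normalizing a competitor $\phi_1$ (the paper assumes $\sup_X(\phi_1-\phi)=0$, you subtract $c=\sup_X(\phi_1-\phi)$; these are the same move) and then applying monotonicity of $E_{\theta_1}$ together with the extremal property of the envelope. The second half is where you genuinely diverge. The paper stays entirely inside its variational framework: by Lemma~\ref{lem:SProperties}, Part~(iii), solving \eqref{eq:EqPot1}--\eqref{eq:EqPot3} is equivalent to being a maximizer of $f_\phi$ (up to the normalizing constant $C$), so the equivalence with $\mathcal P_{\theta_1}(\phi)$ follows from the first half together with uniqueness of maximizers, Lemma~\ref{lem:SProperties}, Part~(ii). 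In particular, the fact that the Monge--Amp\`ere measure of the envelope is concentrated on the contact set is \emph{derived} there, as an output of the Euler--Lagrange analysis resting on \eqref{eq:EnergyDifferentiability}, rather than cited. You instead import that classical orthogonality relation from \cite{BB}/\cite{BBWN} to verify that $\mathcal P_{\theta_1}(\phi)$ satisfies \eqref{eq:EqPot3}, and then settle the converse by the uniqueness clause of Theorem~\ref{thm:ExistUnique} (whose sum condition pins down the constant, exactly as you say). Both routes are non-circular --- as you checked, Lemma~\ref{lem:SProperties} and Theorem~\ref{thm:ExistUnique} precede this lemma and make no use of it. Your version is a bit shorter and makes transparent that the lemma is the classical orthogonality relation in disguise; the paper's version buys self-containedness, recovering that classical fact as a byproduct of its own machinery instead of assuming it, which fits its theme that the $m=1$ theory is a special case of the general construction.
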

\begin{proof}
To prove \eqref{eq:m1Lemma} it suffices to prove that $\mathcal P_{\theta_1}(\phi)$ is a maximizer of $f_\phi$. Let $\phi_1\in \mathcal E_1(\theta_1)$. By invariance of $f_\phi$ we may assume 
$ \sup_X (\phi_1-\phi) = 0 $. It follows that $\mathcal P_{\theta_1}(\phi)\geq \phi_1$ and, by monotonicity of $E_{\theta_1}$, that 
$$ f_\phi(\mathcal P_{\theta_1}(\phi))\geq f_\phi(\phi_1) $$
(see also \cite{B}, proof of Thereom~2.1, Step 2).

The second part of the lemma follows from this and Lemma~\ref{lem:SProperties}. Indeed, by Lemma~\ref{lem:SProperties}, Part~\ref{ItemBetaEquivalence}, $\phi_1$ satisfies \eqref{eq:EqPot1}-\eqref{eq:EqPot3} if and only if $\phi_1$ is a maximizer of $f_\phi$. By the argument above and uniqueness of maximizers (Lemma~\ref{lem:SProperties}, Part~\ref{ItemBetaUniqueness}), this means $\phi_1=\mathcal P_{\theta_1}(\phi)$.
\end{proof}

We now turn to the proof of Theorem~\ref{thm:Differentiability}. We begin with the following lemma.
\begin{lemma}
\label{lem:convexity}
The functional $F$ is concave.
\end{lemma}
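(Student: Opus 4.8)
The plan is to prove concavity directly from the definition $F(\phi)=\sup f_\phi$, exploiting that the only dependence on $\phi$ sits in the normalizing term $-\sup_X(\sum_j \phi_j-\phi)$, which is concave in $\phi$, while the energy terms depend only on the potentials. The key move is to choose the right competitor tuple at an interpolated weight. There is no need to appeal to any of the machinery of the Legendre transform here; the inequality can be obtained by hand.

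First I would fix $\phi^0,\phi^1\in C(X)$ and $t\in[0,1]$, and set $\phi^t=t\phi^0+(1-t)\phi^1$. By the existence statement in Lemma~\ref{lem:SProperties}, the suprema defining $F(\phi^0)$ and $F(\phi^1)$ are attained, so I may pick maximizers $(\phi_1^0,\ldots,\phi_m^0)$ and $(\phi_1^1,\ldots,\phi_m^1)$ in $\prod_j\mathcal E_1(\theta_j)$. I then form the interpolated tuple $\phi_j^t=t\phi_j^0+(1-t)\phi_j^1$. Since each $\mathcal E_1(\theta_j)$ is convex (and the interpolated energies stay finite), $(\phi_1^t,\ldots,\phi_m^t)$ is an admissible competitor, whence $F(\phi^t)\geq f_{\phi^t}(\phi_1^t,\ldots,\phi_m^t)$.

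Next I would estimate $f_{\phi^t}(\phi_1^t,\ldots,\phi_m^t)$ term by term. For the energy contributions, concavity of each $E_{\theta_j}$ gives $E_{\theta_j}(\phi_j^t)\geq t E_{\theta_j}(\phi_j^0)+(1-t)E_{\theta_j}(\phi_j^1)$. For the normalizing term, writing $A=\sum_j\phi_j^0-\phi^0$ and $B=\sum_j\phi_j^1-\phi^1$ so that $\sum_j\phi_j^t-\phi^t=tA+(1-t)B$, sublinearity of the supremum yields $\sup_X(tA+(1-t)B)\leq t\sup_X A+(1-t)\sup_X B$, and hence $-\sup_X(\sum_j\phi_j^t-\phi^t)\geq -t\sup_X A-(1-t)\sup_X B$. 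Adding these inequalities gives $f_{\phi^t}(\phi_1^t,\ldots,\phi_m^t)\geq tF(\phi^0)+(1-t)F(\phi^1)$, and combining with $F(\phi^t)\geq f_{\phi^t}(\phi_1^t,\ldots,\phi_m^t)$ concludes the argument.

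I expect no deep obstacle here: this is the standard ``interpolate the maximizers'' device, and the whole point is that $F$ is a supremum of functions each of which is concave in $\phi$ through the well-behaved $\sup$ term. The only points requiring care are checking that the interpolated tuple is genuinely admissible (convexity of $\mathcal E_1(\theta_j)$ and finiteness of energy under interpolation) and getting the direction of the supremum inequality right, since it is precisely the sign there that delivers concavity rather than convexity of $F$. If one wished to avoid invoking existence of maximizers, the same computation goes through with $\varepsilon$-almost-maximizers followed by letting $\varepsilon\to 0$.
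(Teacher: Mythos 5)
Your proof is correct and takes essentially the same approach as the paper: interpolate the tuples of potentials, use concavity of each $E_{\theta_j}$ for the energy terms and sublinearity of $\sup_X$ for the normalizing term, then compare with $F(\phi^t)=\sup f_{\phi^t}$. The only cosmetic difference is that the paper interpolates \emph{arbitrary} tuples and takes suprema over both at the end, so it never needs existence of maximizers --- precisely the variant you mention in your closing remark.
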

\begin{proof}
Let $\phi,\phi'\in C(X)$ and $(\phi_1,\ldots,\phi_m),(\phi_1',\ldots,\phi_m')\in \prod_{j=1}^m \mathcal E_1(\theta_j)$. For $t\in [0,1]$, we will use the notation
\begin{eqnarray}
\phi^t & = & t\phi'+(1-t)\phi \nonumber \\
\phi^t_j & = & t\phi_j'+(1-t)\phi_j,\;\;\; j\in \{0,\ldots,m\}. \nonumber
\end{eqnarray}
By concavity of $E_{\theta_j}$
\begin{eqnarray}
f_{\phi^t}(\phi_1^t,\ldots,\phi_m^t) & = & \sum_{j=1}^m E_{\theta_j}(\phi_j^t) - \sup_x \left(\sum_{j=1}^m \phi_j^t-\phi^t\right) \nonumber \\
& \geq & t \sum_{j=1}^m E(\phi_j')+(1-t)\sum_{j=1}^m E(\phi_j) \nonumber \\
& & - t\sup_X \left(\sum_{j=1}^m \phi_j'-\phi'\right) - (1-t)\sup_X \left((\sum_{j=1}^m \phi_j-\phi\right) \nonumber \\
& \geq & t f_{\phi'}(\phi_1',\ldots,\phi_m') + (1-t) f_{\phi}(\phi_1,\ldots,\phi_m). \nonumber 
\end{eqnarray}
It follows that 
\begin{eqnarray} F(\phi^t) & = & \sup f_{\phi^t} \nonumber \\
& \geq & t\sup f_{\phi'} + (1-t)\sup f_\phi \nonumber \\
& = & tF(\phi') + (1-t)F(\phi) \nonumber
\end{eqnarray}
which proves the lemma. 
\end{proof}

\begin{proof}[Proof of Theorem~\ref{thm:Differentiability}]
By concavity of 
$ g(t) = F(\phi+tv) $
(Lemma~\ref{lem:convexity}), the one sided limits
$$ \lim_{t\rightarrow 0_-} \frac{g(t)-g(0)}{t}\;\;\; \text{ and } \;\;\; \lim_{t\rightarrow 0_+} \frac{g(t)-g(0)}{t} $$
exist and
\begin{equation}
    \label{eq:ELeftRightDerIneq}
    \lim_{t\rightarrow 0_-} \frac{g(t)-g(0)}{t} \geq \lim_{t\rightarrow 0_+} \frac{g(t)-g(0)}{t}.
\end{equation}
Let $(\phi_1,\ldots,\phi_m)$ be the maximizer of $f_\phi$, hence 
\begin{eqnarray}
g(0) & = & f_\phi(\phi_1,\ldots,\phi_m) \nonumber \\
& = & \sum_{j=1}^m \frac{E_{\theta_j}(\phi_j)}{\int_X \theta_j} + \sup_X \left(\sum_{j=1}^m \phi_j -\phi\right).
\end{eqnarray} 
Note also that
\begin{eqnarray}
g(t) & = & \sup f_{\phi+tv} \nonumber \\
& \geq & f_{\phi+tv}(\phi_1+tv,\phi_2,\ldots,\phi_m) \nonumber \\
& = & \frac{E_{\theta_1}(\phi_1+tv)}{\int_X \theta_1} + \sum_{j=2}^m \frac{E_{\theta_j}(\phi_j)}{\int_X \theta_j} + \sup_X \left(\sum_{j=1}^m \phi_j -\phi\right). \nonumber
\end{eqnarray} 
It follows that
\begin{eqnarray}
    \lim_{t\rightarrow 0_+} \frac{g(t)-g(0)}{t} & \geq &  \frac{1}{\int_X\theta_1^n}\lim_{t\rightarrow 0_+} \frac{E_{\theta_1}(\phi_1+tv)-E_{\theta_1}(\phi_1)}{t} \nonumber \\
    & = & \int_X v \frac{(\theta_1+dd^c\phi_1)^n}{\int_X \theta_1^n} \nonumber \\
    & = & \int_X v \mu_{eq}. \label{eq:ELeftDerIneq}
\end{eqnarray}
and similarly
\begin{equation}
    \label{eq:ERightDerIneq}
    \lim_{t\rightarrow 0_-} \frac{g(t)-g(0)}{t} \leq \int_X v \mu_{eq}.
\end{equation}
By \eqref{eq:ELeftRightDerIneq}, the inequalities in \eqref{eq:ELeftDerIneq} and \eqref{eq:ERightDerIneq} has to be equalitites, and 
$$ \lim_{t\rightarrow 0} \frac{g(t)-g(0)}{t} = \int_X v \mu_{eq}. $$
\end{proof}

\begin{remark}\label{rem:Diff}
Formally, Theorem~\ref{thm:Differentiability} can also be seen as a consequence of the following two properties of Legendre transform for convex functions on finite dimensional vector spaces: 
Let $g_1,\ldots,g_m$ be convex functions on $\mathbb R^n$, superscript $*$ denote Legendre transform, $d$ denote derivative and $\infconv(g_1,\ldots g_m)$ be the infimal convolution of $g_1,\ldots,g_m$. Then
\begin{itemize}
    \item $ \infconv(g_1,\ldots g_m)^* = \sum_{j=1}^m g_j^* $
    \item $dg_j$ is the inverse of $d(g_j^*)$
\end{itemize}
hence
\begin{itemize}
    \item $d \infconv(g_1,\ldots g_m)$ is the inverse of $d\sum_{j=1}^m g_j^*$.
\end{itemize}
Indeed, if we think of $F$ as the infimal convolution of \begin{equation} \label{eq:RemEnergies} \frac{E_{\theta_1}\circ\mathcal P_{\theta_1}}{\int_X \theta_1^n},\;\ldots\;,\frac{E_{\theta_m}\circ\mathcal P_{\theta_m}}{\int_X \theta_m^n}
\end{equation}
and argue by analogy to the finite dimensional case then the Legendre transform of $F$ is given by the sum of the Legendre transforms of the functionals in \eqref{eq:RemEnergies}. By the second point above, the derivative of the $j$'th term in this sum is given by the (partially defined) inverse Monge-Ampère operator from the space of probability measures to the space of $\theta_j$-plurisubharmonic functions. Taking the sum of these inverse Monge-Ampère operators and inverting the resulting map we get precisely the map $\phi\rightarrow \mu_{eq}$.
\end{remark}

\section{Regularization scheme}\label{sec:Therm}
In this section we will set uo the refularization scheme used in the proof of Theorem~\ref{thm:Regularity}. The main point is the following theorem.
\begin{theorem}\label{thm:BetaTemp}
Let $\theta_1,\ldots,\theta_m$ be Kähler forms on $X$ and $\phi\in C(X)$. Assume also $\omega_0$ is a Kähler form on $X$ of unit volume and $\beta>0$. Then \eqref{eq:Beta} admits a unique solution satisfying \eqref{eq:normalization}. Moreover, the solution is continuous. If, in addition, $\phi$ is smooth then the solution is smooth.  
\end{theorem}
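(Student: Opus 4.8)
The plan is to realize solutions of \eqref{eq:Beta} as critical points of a strictly concave functional, in the spirit of the variational treatment of coupled Kähler--Einstein metrics in \cite{HWN}, and then to upgrade the resulting weak solution to a classical one by a priori estimates. Concretely, I would consider the functional on $\prod_{j=1}^m \mathcal E_1(\theta_j)$ given by
$$ G_\beta(\phi_1,\ldots,\phi_m) = \sum_{j=1}^m \frac{E_{\theta_j}(\phi_j)}{\int_X \theta_j^n} - \frac{1}{\beta}\log \int_X e^{\beta(\sum_{k=1}^m \phi_k - \phi)}\,\omega_0^n. $$
Since $\psi\mapsto \frac1\beta\log\int_X e^{\beta\psi}\omega_0^n$ is convex and each $E_{\theta_j}$ is concave, $G_\beta$ is concave, and it is invariant under $\phi_j\mapsto \phi_j+c_j$ for arbitrary constants $c_j$. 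Differentiating and using the differentiability property \eqref{eq:EnergyDifferentiability} of $E_{\theta_j}$, the Euler--Lagrange equations assert precisely that all the normalized measures $(\theta_j+dd^c\phi_j)^n/\int_X\theta_j^n$ coincide with the probability measure $e^{\beta(\sum_k\phi_k-\phi)}\omega_0^n/\int_X e^{\beta(\sum_k\phi_k-\phi)}\omega_0^n$. Shifting one potential by $-\tfrac1\beta\log\int_X e^{\beta(\sum_k\phi_k-\phi)}\omega_0^n$ normalizes the exponential factor to unit mass and converts a critical point into a genuine solution of \eqref{eq:Beta}, after which the remaining translation freedom is fixed by \eqref{eq:normalization}.

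For existence I would restrict $G_\beta$ to the slice $\{\sup_X\phi_j=0 \text{ for all } j\}$. There $E_{\theta_j}(\phi_j)\le 0$, while the sub-mean-value property together with Jensen's inequality bounds the logarithmic term from above and below by constants depending only on $\|\phi\|_\infty$ and $\beta$; hence $G_\beta=\sum_j E_{\theta_j}(\phi_j)/\int_X\theta_j^n+O(1)$ on the slice, and the properness of $E_{\theta_j}$ in the $L^1$-topology yields a maximizer along any maximizing sequence, exactly as in the existence part of Lemma~\ref{lem:SProperties}. Uniqueness follows from strict concavity: along a segment joining two maximizers, concavity forces both each $E_{\theta_j}$ and the log-sum-exp term to be affine; affineness of the latter forces $\sum_k(\phi_k-\phi'_k)$ to be constant, while affineness of $E_{\theta_j}$ forces $(\theta_j+dd^c\phi_j)^n=(\theta_j+dd^c\phi'_j)^n$ and hence (Theorem~B of \cite{GZ}) $\phi_j-\phi'_j$ constant for each $j$. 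Together with \eqref{eq:normalization} this pins down the solution.

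It remains to prove regularity, which is the analytic heart. For smooth $\phi$ I would solve \eqref{eq:Beta} by the continuity method, as in \cite{HWN} and \cite{B}: openness of the parameter set admitting a smooth solution follows from invertibility of the linearization (a coupled system of second-order elliptic operators, invertible by the strict concavity of $G_\beta$ and the maximum principle), while closedness rests on a priori estimates. The key estimates are a $C^0$ bound on the potentials --- equivalently a two-sided bound on $\beta(\sum_k\phi_k-\phi)$, obtained by maximum-principle and comparison arguments for the coupled system --- followed by an Aubin--Yau--Siu Laplacian bound giving boundedness of the currents $\theta_j+dd^c\phi_j$, and finally the Evans--Krylov $C^{2,\alpha}$ estimate and Schauder bootstrapping to $C^\infty$. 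Crucially, once the common factor $\sum_k\phi_k$ is $C^0$-controlled, each of the $m$ equations is a scalar complex Monge--Ampère equation with smooth positive density, so these single-equation estimates apply equation by equation, as in \cite{HWN}. For merely continuous $\phi$ I would approximate it uniformly by smooth functions $\phi^{(\ell)}$; the $C^0$ bounds above are uniform in $\ell$ (they depend only on $\|\phi^{(\ell)}\|_\infty$ and $\beta$), so the densities are uniformly bounded and Ko\l odziej's capacity estimates \cite{Kol} furnish a uniform modulus of continuity for the $\phi_j^{(\ell)}$. Arzelà--Ascoli then extracts uniformly convergent subsequences whose limit solves \eqref{eq:Beta} by stability of the Monge--Ampère operator, and the variational uniqueness identifies this continuous limit as the unique normalized solution.

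The main obstacle is precisely this pair of a priori estimates for the coupled system --- the uniform $C^0$ bound controlling $\beta(\sum_k\phi_k-\phi)$ and the Laplacian bound --- since the $m$ Monge--Ampère equations interact through the single common factor $\sum_k\phi_k$; this is where the techniques of \cite{HWN} are essential. By contrast, the variational existence and uniqueness, Ko\l odziej's continuity, and the Schauder bootstrap are routine once these estimates are secured.
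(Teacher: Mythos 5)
Your variational core---the functional $G_\beta$, its concavity and translation invariance, the Jensen/sub-mean-value coercivity on the normalized slice, properness of $E_{\theta_j}$ for existence, the affineness argument plus Theorem~B of \cite{GZ} for uniqueness, and the Euler--Lagrange identification with \eqref{eq:Beta}---is exactly the paper's Lemma~\ref{lem:SBetaPorperties} (your $G_\beta$ is the paper's $f_\phi^\beta$). Where you diverge is the regularity step, and there you miss the one observation that makes the paper's proof short: for the weak solution produced variationally, the exponent $\sum_j\phi_j^\beta-\phi$ is upper semi-continuous on the compact manifold $X$, hence bounded above, so the density $e^{\beta(\sum_j\phi_j^\beta-\phi)}$ of each measure $(\theta_j+dd^c\phi_j^\beta)^n$ is \emph{automatically} in $L^\infty$. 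Ko\l odziej's estimates \cite{Kol} then give continuity of the weak solution directly, with no approximation of $\phi$ and no second existence proof; and when $\phi$ is smooth, the density has the form $e^{\psi^+-\psi^-}$ with $\psi^\pm$ bounded quasi-plurisubharmonic, so Theorem~10.1 of \cite{BBEGZ} yields a Laplacian bound and ordinary bootstrapping gives smoothness. In particular the continuity method is never needed: existence is already settled variationally, and only the regularity of that single solution is at stake.

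Your route could likely be pushed through, but as written it has two concrete gaps. First, the linearization of the coupled system at a solution is \emph{not} invertible: \eqref{eq:Beta} is invariant under $\phi_j\mapsto\phi_j+c_j$ with $\sum_j c_j=0$, and correspondingly the linearized operator $(\psi_1,\ldots,\psi_m)\mapsto\bigl(\Delta_{\omega_j^\beta}\psi_j-\beta\sum_k\psi_k\bigr)_j$ has the $(m-1)$-dimensional kernel of constant tuples summing to zero (an integration-by-parts argument against the common measure shows this is the whole kernel), together with a matching cokernel; so openness in your continuity method must be formulated on a slice or quotient, as in \cite{HWN}, rather than via ``invertibility by strict concavity and the maximum principle.'' Second, for merely continuous $\phi$ you invoke ``a uniform modulus of continuity'' from Ko\l odziej's capacity estimates, but the continuity statement the paper relies on is qualitative; to run Arzel\`a--Ascoli across your approximating family $\phi^{(\ell)}$ you need a quantitative input---either Ko\l odziej's H\"older estimate for $L^p$ densities or a stability estimate of the type of Proposition~2.6 in \cite{EGZ} (which the paper does use, but only later, for the $\beta\to\infty$ limit). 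Both points are repairable, and your uniform bound on the densities in terms of $\|\phi\|_\infty$ and $\beta$ is correct, but the resulting proof is substantially longer than the paper's one-paragraph regularity argument.
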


\begin{lemma}
\label{lem:SBetaPorperties}
Let $f_\phi^\beta: \prod_{j=1}^m \mathcal E_1(\theta_j) \rightarrow \mathbb R$ be the functional given by
    \begin{equation} 
    \label{eq:fBeta} f_\phi^\beta(\phi_1,\ldots,\phi_m) = \sum_{j=1}^m \frac{E_{\theta_j}(\phi_j)}{\int_X \theta_j^n} - \frac{1}{\beta}\log\int_X e^{\beta(\sum \phi_j - \phi)}\omega_0^n. \end{equation}
    Then the following holds:
    \begin{enumerate}[(i)]
        \item The functional $f_\phi^\beta$ admits a minimizer. \label{ItemBetaExistence}
        \item \label{ItemBetaUniqueness} If $(\phi_1,\ldots,\phi_m)$ and $(\phi'_1,\ldots,\phi'_m)$ are two minimizers of $f_\phi^\beta$ 
    then $\phi_j - \phi_j'$ is constant for each $j\in \{1,\ldots,m\}$. 
        \item \label{ItemBetaEquivalence} An element $(\phi_1,\ldots,\phi_m)\in \prod_{j=1}^m \mathcal E_1(\theta_i)$ is a maximizer of $f_\phi^\beta$ if and only if $(\phi_1,\ldots,\phi_m-C)$ solves \eqref{eq:Beta}, where $C=\log \int_X e^{\beta(\sum_{j=1}^m \phi_j-\phi)}\omega_0^n$.
    \end{enumerate}
\end{lemma}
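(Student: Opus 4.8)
The plan is to prove Lemma~\ref{lem:SBetaPorperties} in close analogy with Lemma~\ref{lem:SProperties}, the only structural change being that the soft maximum $\tfrac{1}{\beta}\log\int_X e^{\beta(\sum_j\phi_j-\phi)}\omega_0^n$ replaces the hard maximum $\sup_X(\sum_j\phi_j-\phi)$. Two preliminary observations drive everything, and I would record them first. Since $\log\int_X e^{\beta(\cdot)}\omega_0^n$ is convex in its argument and $\sum_j\phi_j$ is linear, the soft-max term is convex in $(\phi_1,\dots,\phi_m)$; together with concavity of each $E_{\theta_j}$ this makes $f_\phi^\beta$ concave, so throughout I look for \emph{maximizers} (consistent with Part~\eqref{ItemBetaEquivalence} and with Lemma~\ref{lem:SProperties}). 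Moreover $f_\phi^\beta$ is invariant under adding an arbitrary constant to any $\phi_j$: adding $c_j$ increases $\sum_j E_{\theta_j}(\phi_j)/\int_X\theta_j^n$ by $\sum_j c_j$ and increases $\tfrac1\beta\log\int_X e^{\beta(\sum_j\phi_j-\phi)}\omega_0^n$ by the same $\sum_j c_j$, and since the latter enters with a minus sign the two contributions cancel.

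For Part~\eqref{ItemBetaExistence} I would run the direct method exactly as in Lemma~\ref{lem:SProperties}\eqref{ItemExistence}. Take a maximizing sequence and use invariance to normalize $\sup_X\phi_j^i=0$, so $\phi_j^i\le 0$ and hence $E_{\theta_j}(\phi_j^i)\le 0$. Jensen's inequality (with $\omega_0^n$ a probability measure) gives $\tfrac1\beta\log\int_X e^{\beta(\sum_j\phi_j^i-\phi)}\omega_0^n\ge \int_X(\sum_j\phi_j^i-\phi)\omega_0^n$, and the sub-mean-value property bounds $\int_X\phi_j^i\,\omega_0^n$ from below; this is the exact analogue of \eqref{eq:CoerLowerBnd} and forces each $E_{\theta_j}(\phi_j^i)$ to be bounded below. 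Properness of $E_{\theta_j}$ then yields an $L^1$-convergent subsequence $\phi_j^i\to\phi_j$. To pass to the limit I use upper semicontinuity of $E_{\theta_j}$ for the energy terms, while the soft-max term is continuous along the sequence: the integrands $e^{\beta(\sum_j\phi_j^i-\phi)}$ are uniformly bounded (by $e^{\beta\|\phi\|_\infty}$ since $\phi_j^i\le 0$), so dominated convergence applies along an a.e.-convergent subsequence. Hence $f_\phi^\beta(\phi_1,\dots,\phi_m)\ge\sup f_\phi^\beta$.

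Part~\eqref{ItemBetaUniqueness} is essentially verbatim Lemma~\ref{lem:SProperties}\eqref{ItemUniqueness}: concavity forces $t\mapsto f_\phi^\beta(t\phi_j'+(1-t)\phi_j)$ to be constant, so each concave summand, in particular $t\mapsto E_{\theta_j}(t\phi_j'+(1-t)\phi_j)$, is affine along the segment, and the differentiable-concave-function argument applied to $g(t,s)=E_{\theta_j}\circ\mathcal P_{\theta_j}(t\phi_j+(1-t)\phi_j'+sv)$ together with \eqref{eq:EnergyDifferentiability} gives $(\theta_j+dd^c\phi_j)^n=(\theta_j+dd^c\phi_j')^n$, whence $\phi_j-\phi_j'$ is constant by Theorem~B of \cite{GZ}. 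The core is Part~\eqref{ItemBetaEquivalence}, where I use the projection trick of Lemma~\ref{lem:SProperties}\eqref{ItemEquivalence}: set $\hat f^\beta(v_1,\dots,v_m)=\sum_j E_{\theta_j}(\mathcal P_{\theta_j}(\phi_j+v_j))/\int_X\theta_j^n-\tfrac1\beta\log\int_X e^{\beta(\sum_j(\phi_j+v_j)-\phi)}\omega_0^n$; since $\mathcal P_{\theta_j}(\phi_j+v_j)\le\phi_j+v_j$ one gets $\hat f^\beta(v)\le f_\phi^\beta(\mathcal P_{\theta_1}(\phi_1+v_1),\dots)\le f_\phi^\beta(\phi)=\hat f^\beta(0)$, so $0$ maximizes $\hat f^\beta$. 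Both terms are differentiable at $0$ (the energy–projection terms by \eqref{eq:EnergyDifferentiability}, the soft-max term because it is smooth), and the derivative of the soft-max term in direction $(v_1,\dots,v_m)$ is $\int_X(\sum_j v_j)\mu^\beta$ with the Gibbs-type probability measure $\mu^\beta:=e^{\beta(\sum_j\phi_j-\phi)}\omega_0^n\big/\int_X e^{\beta(\sum_j\phi_j-\phi)}\omega_0^n$. Vanishing of the gradient in each variable separately gives $\tfrac{(\theta_j+dd^c\phi_j)^n}{\int_X\theta_j^n}=\mu^\beta$ for every $j$, which is \eqref{eq:Beta} up to the normalizing constant; subtracting $C=\tfrac1\beta\log\int_X e^{\beta(\sum_j\phi_j-\phi)}\omega_0^n$ from $\phi_m$ rescales the right-hand side to $e^{\beta(\sum_j\phi_j-\phi)}\omega_0^n$ and produces an honest solution. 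For the converse I interpolate $\phi_j^t=(1-t)\phi_j+t\psi_j$ toward an arbitrary competitor $(\psi_j)\in\prod_j\mathcal E_1(\theta_j)$; the right derivative of $f_\phi^\beta(\phi^t)$ at $t=0$ equals $\sum_j\int_X(\psi_j-\phi_j)\big(\tfrac{(\theta_j+dd^c\phi_j)^n}{\int_X\theta_j^n}-\mu^\beta\big)=0$ by the Euler–Lagrange identity, and concavity upgrades this to $f_\phi^\beta(\psi)\le f_\phi^\beta(\phi)$.

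I expect the main obstacle to be the bookkeeping in Part~\eqref{ItemBetaEquivalence}: correctly differentiating the log-partition term, identifying the common value of the normalized Monge–Ampère measures as the Gibbs measure $\mu^\beta$, and reconciling the normalization between the probability measure $\mu^\beta$ and the \emph{un-normalized} right-hand side of \eqref{eq:Beta} (the role of the constant $C$). A secondary point requiring care is that a maximizer $(\phi_j)$ is a priori only in $\mathcal E_1(\theta_j)$, so invoking the differentiability property \eqref{eq:EnergyDifferentiability} — stated for continuous data — at the base point $\phi_j$ must be justified just as in the proof of Lemma~\ref{lem:SProperties}.
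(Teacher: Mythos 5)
Your proposal is correct and follows essentially the same route as the paper: the direct method with Jensen's inequality replacing \eqref{eq:CoerLowerBnd} for existence, the affine-segment argument (using concavity of the log-integral term) for uniqueness, and the projection trick with $\hat f^\beta$ and the Gibbs measure $\mu^\beta$ for the Euler--Lagrange characterization, with your segment argument in $\prod_j \mathcal E_1(\theta_j)$ for the converse being a slightly more careful version of the paper's ``stationary point plus concavity'' step. One point in your favor: your normalizing constant $C=\tfrac1\beta\log\int_X e^{\beta(\sum_j\phi_j-\phi)}\omega_0^n$ is the correct one, whereas the factor $1/\beta$ is missing from the constant in the lemma statement and in the paper's proof.
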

\begin{proof}
We begin with
\paragraph{Part \eqref{ItemBetaExistence}}
This follows in the same way as Lemma~\ref{lem:SProperties}, Part~\eqref{ItemExistence}, replacing \eqref{eq:CoerLowerBnd} by the following application of Jensen's inequality:
\begin{equation} \label{eq:CoerLowerBndBeta} \frac{1}{\beta}\log\int_X e^{\beta(\sum_{j=1}^m \phi_j-\phi)}\omega_0^n \geq \int_X (\sum \phi_j^i - \phi)\omega_0^n \geq -\sum C_j -\int_X \phi. \end{equation} 

\paragraph{Part \eqref{ItemBetaUniqueness}} This follows in the same way as Lemma~\ref{lem:SProperties}, Part~\eqref{ItemUniqueness}, using the fact that  the functional
$$ (\phi_1,\ldots,\phi_m) \mapsto -\frac{1}{\beta} \int_X e^{\beta(\sum_{j=1}^m \phi_j-\phi)}\omega_0^n $$
is concave by Hölder's Inequality.

\paragraph{Part \eqref{ItemBetaEquivalence}} This is simpler than Lemma~\ref{lem:SProperties}, Part~\eqref{ItemEquivalence}. 
Similarly as in the proof of Lemma~\ref{lem:SProperties}, Part~\eqref{ItemEquivalence} we will consider the functional on $C(X)^m$ given by 
$$ \hat f^\beta(v_1,\ldots,v_m) = \sum_{j=1}^m \frac{E_{\theta_j}(\mathcal P_{\theta_j}(\phi_j+tv_j))}{\int_X\theta_j^n} - \frac{1}{\beta}\int_X e^{\beta(\sum_{j=1}^m (\phi_j+v_j)-\phi)}\omega_0^n. $$
If $(\phi_1,\ldots,\phi_m)$ is a maximizer of $f_\phi^\beta$, then $(0,\ldots,0)$ is a maximizer of $\hat f^\beta$ and differentiating $\hat f^\beta$ in the direction given by $(v,0,\ldots,0)$ for $v\in C(X)$ gives
$$ \int_X v \frac{(\theta_1+dd^c\phi_1)^n}{\int_X \theta_1^n} = \int_X v\frac{e^{\beta(\sum \phi_j-\phi)}\omega_0^n}{\int_Xe^{\beta(\sum \phi_j-\phi)}\omega_0^n}. $$
A similar argument gives
$$ \int_X v \frac{(\theta_j+dd^c\phi_j)^n}{\int_X \theta_1^n} = \int_X v\frac{e^{\beta(\sum \phi_j-\phi)}\omega_0^n}{\int_Xe^{\beta(\sum \phi_j-\phi)}\omega_0^n} $$
for any $j$. It follows that 
$$\left(\phi_1,\ldots,\phi_m-\log\int_Xe^{\beta(\sum \phi_j-\phi)}\omega_0^n\right)$$
solves \eqref{eq:Beta}. Conversely, assume $(\phi_1,\ldots,\phi_m)$ solves \eqref{eq:Beta}. Then the derivatives of $\hat f^\beta$ in the directions given by
$$ (v,0,\ldots,0),(0,v,,0\ldots,0),\ldots,(0,\ldots,0,v) $$ 
vanishes for any $v\in C(X)$. It follows that $(\phi_1,\ldots,\phi_m)$ is a stationary point of $f_\phi^\beta$ and, by concavity of $f_\phi^\beta$, a maximizer. 
\end{proof}

\begin{proof}[Proof of Theorem~\ref{thm:BetaTemp}]
Existence of a unique weak solution follows directly from Lemma~\ref{lem:SBetaPorperties}. By the first point in Lemma~\ref{lem:SBetaPorperties}, $f_\phi$ admits a maximizer. By the third point in Lemma~\ref{lem:SBetaPorperties}, this maximizer is a solution of \eqref{eq:EqPot1}-\eqref{eq:EqPot3}. Uniqueness follows from Lemma~\ref{lem:SBetaPorperties} in the same way as in the proof of Theorem~\ref{thm:ExistUnique}. 

For regularity of $\phi_j^\beta$, note that by upper semi-continuity of $\sum \phi_j^\beta-\phi$, the Monge-Ampère mass of $\phi_j^\beta$ has bounded density. By Ko\l odziej's capacity estimates for measures with $L^p$ density, this implies continuity of $\phi_j$ (see Theorem~2.5.2 and Example~2 on page 91 in~\cite{Kol}). For smoothness of $\phi_j^\beta$ when $\phi$ is smooth, we may apply Theorem~10.1 in \cite{BBEGZ} to get a bound on $\theta_j+dd^c\phi_j^\beta$ and proceed with general bootstrapping techniques. Alternatively, we can use the \emph{uniform} estimates on $\theta_j+dd^c\phi_j^\beta$ proved in Lemma~\ref{lem:LaplaceEstimate} below. See also Section~2.4 of \cite{HWN}.
\end{proof}

\section{Convergence as $\beta\rightarrow \infty$ and regularity}\label{sec:Convergence}
Theorem~\ref{thm:Regularity} will follow from Theorem~\ref{thm:BetaTemp} and the following theorem. 
\begin{theorem}
\label{thm:Convergence}
Assume $\phi$ is continuous. For each $\beta>0$, let $(\phi_1^\beta,\ldots,\phi_m^\beta)$ be the unique solution to \eqref{eq:Beta} satisfying \eqref{eq:normalization} and let $(\phi_1',\ldots,\phi_m')$ be the unique solution to \eqref{eq:EqPot1}-\eqref{eq:EqPot3} satisfying \eqref{eq:normalization}. Then
\begin{enumerate}[(i)]
    \item For each $j\in \{1,\ldots,m\}$, $\phi_j^\beta\rightarrow \phi_j'$ in energy, i.e. $\phi_j^\beta\rightarrow \phi_j'$ in $L^1$ and $E(\phi_j^\beta)\rightarrow E(\phi_j')$. \label{item:EnergyConvergence}
    \item If the Monge-Ampère measure of $\mathcal P(\phi)$ has bounded density, then the convergence in Part~\eqref{item:EnergyConvergence} holds in $L^\infty$.
    \item If $\phi$ is smooth, then the convergence Part~\eqref{item:EnergyConvergence} holds in $C^{1,\alpha}$ for every $\alpha<1$. Moreover, $\theta_j+dd^c\phi_j^\beta$ is bounded uniformly in $\beta$. 
\end{enumerate}
\end{theorem}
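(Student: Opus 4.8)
The plan is to prove Theorem~\ref{thm:Convergence} in three stages matching its three parts, with the energy convergence in Part~\eqref{item:EnergyConvergence} serving as the foundation for everything else. The organizing principle is that the $\beta$-equation \eqref{eq:Beta} is the Euler--Lagrange equation of $f_\phi^\beta$ (Lemma~\ref{lem:SBetaPorperties}) while \eqref{eq:EqPot1}--\eqref{eq:EqPot3} is the Euler--Lagrange equation of $f_\phi$ (Lemma~\ref{lem:SProperties}), and that $f_\phi^\beta \to f_\phi$ in a controlled way as $\beta \to \infty$. The key analytic input is the elementary inequality relating the two penalization terms: by Jensen's inequality one always has
\begin{equation}
\sup_X\Big(\sum_j \phi_j - \phi\Big) \;\geq\; \frac{1}{\beta}\log\int_X e^{\beta(\sum_j \phi_j - \phi)}\omega_0^n \;\geq\; \sup_X\Big(\sum_j \phi_j - \phi\Big) - \frac{1}{\beta}\log\frac{1}{\epsilon_\beta}, \nonumber
\end{equation}
so the soft-max term converges to the sup uniformly on sublevel sets, forcing $f_\phi^\beta \to f_\phi$ monotonically from above.

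**For Part~\eqref{item:EnergyConvergence}** I would first extract a uniform energy bound. Evaluating $f_\phi^\beta$ at the true extremal potentials $(\phi_1',\ldots,\phi_m')$ and at the $\beta$-solutions $(\phi_1^\beta,\ldots,\phi_m^\beta)$, and using minimality together with the Jensen sandwich above, gives that the energies $E_{\theta_j}(\phi_j^\beta)$ are bounded below uniformly in $\beta$; combined with the normalization \eqref{eq:normalization} and properness of $E_{\theta_j}$ this yields $L^1$-compactness of the family $\{\phi_j^\beta\}$. Any $L^1$-limit point is then shown to be a maximizer of $f_\phi$ by upper semicontinuity of $E_{\theta_j}$ and the uniform convergence of the penalty terms, hence equals $(\phi_1',\ldots,\phi_m')$ by the uniqueness in Lemma~\ref{lem:SProperties}\eqref{ItemUniqueness}; since the whole family has a unique limit point, $\phi_j^\beta \to \phi_j'$ in $L^1$. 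Upgrading to convergence of energies uses that $f_\phi^\beta(\phi^\beta) \to f_\phi(\phi')$ together with the lower semicontinuity of $-\sup$ and $E_{\theta_j}$, squeezing $E_{\theta_j}(\phi_j^\beta) \to E_{\theta_j}(\phi_j')$.

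**For Part~(ii)**, which is the crux, the plan follows the strategy outlined in the introduction. The essential new estimate is a uniform upper bound on $\beta(\sum_j \phi_j^\beta - \phi)$; since each $\theta_j$-equation in \eqref{eq:Beta} shares the common right-hand side $e^{\beta(\sum \phi_j^\beta - \phi)}\omega_0^n$, a comparison/maximum-principle argument against the projection $\mathcal P_{\sum \theta_j}(\phi)$ of the summed class should control this quantity, using the bounded-density hypothesis on $(\sum\theta_j + dd^c\mathcal P_{\sum\theta_j}(\phi))^n$. Such a bound makes the densities of $(\theta_j + dd^c\phi_j^\beta)^n$ uniformly bounded in $L^p$, after which convergence in energy (Part~\eqref{item:EnergyConvergence}) implies convergence in capacity, and Ko\l odziej's capacity estimates (Theorem~2.5.2 and Example~2 on p.~91 of \cite{Kol}) promote this to uniform convergence. **I expect this uniform upper bound on $\beta(\sum_j \phi_j^\beta - \phi)$ to be the main obstacle**, because it must exploit the coupling across the $m$ equations simultaneously and cannot be read off from any single equation in isolation.

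**For Part~(iii)**, with $\phi$ smooth, the plan is to establish a uniform Laplacian bound on $\theta_j + dd^c\phi_j^\beta$ as in \cite{B}, via Siu's variant of the Aubin--Yau estimate applied to each equation in \eqref{eq:Beta}; here the smoothness of $\phi$ and the already-established uniform $L^\infty$ bound from Part~(ii) feed the computation, and the common right-hand side again propagates the estimate across all $j$ (this is the content of the forthcoming Lemma~\ref{lem:LaplaceEstimate}). A uniform two-sided Laplacian bound gives uniform $C^{1,\alpha}$ bounds by standard elliptic theory, so Arzelà--Ascoli yields $C^{1,\alpha}$-convergence along subsequences, and the $L^1$-limit identification from Part~\eqref{item:EnergyConvergence} pins the limit to $(\phi_1',\ldots,\phi_m')$; uniform boundedness of the currents $\theta_j + dd^c\phi_j^\beta$ is then immediate from the Laplacian bound.
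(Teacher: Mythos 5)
Your plan coincides with the paper's own proof in all essentials: for Part~(i), the Jensen sandwich between the soft-max and the sup (the paper's \eqref{eq:VolumeUniformConv}), uniform convergence $f_\phi^\beta\to f_\phi$, properness/compactness of the energy, identification of any $L^1$ limit point as a maximizer of $f_\phi$ via upper semicontinuity and uniqueness, and the squeeze argument upgrading $L^1$ convergence to convergence of each $E_{\theta_j}$; for Part~(ii), the key estimate you isolate --- a uniform upper bound on $\beta(\sup_X(\sum_j\phi_j^\beta-\phi))$ obtained by a comparison-principle argument against $\mathcal P_{\sum\theta_j}(\phi)$ using the bounded-density hypothesis --- is precisely the paper's Lemma~\ref{lem:SumBound}, after which uniform $L^p$ bounds on the densities, convergence in energy $\Rightarrow$ convergence in capacity, and a Ko\l odziej-type stability estimate give $L^\infty$ convergence (the paper runs this last step through Proposition~2.6 of \cite{EGZ} rather than Theorem~2.5.2 of \cite{Kol}, a citation-level difference only); for Part~(iii), the Siu/Aubin--Yau Laplacian estimate propagated through the common right-hand side of \eqref{eq:Beta} is the paper's Lemma~\ref{lem:LaplaceEstimate}.

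There is, however, one genuine gap, at the start of your Part~(iii). You feed ``the already-established uniform $L^\infty$ bound from Part~(ii)'' into the Laplacian estimate, but Part~(ii) is conditional on the hypothesis that the Monge--Amp\`ere measure of $\mathcal P_{\sum\theta_j}(\phi)$ has bounded density, whereas Part~(iii) assumes only that $\phi$ is smooth. The implication ``$\phi$ smooth $\Rightarrow$ the Monge--Amp\`ere measure of the envelope has bounded density'' is not formal: it is Berman's regularity theorem for envelopes (the main result of \cite{B}), which gives that $\Delta\mathcal P_{\sum\theta_j}(\phi)$ is bounded when $\phi$ is smooth. The paper inserts exactly this step as Lemma~\ref{lem:C0EstimateSmooth} before running the Laplacian estimate; without it, the uniform $C^0$ input to your Siu-type computation --- and hence all of your Part~(iii) --- is unsupported. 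Your proposal should either cite this theorem explicitly or supply an independent argument that the smooth case satisfies the bounded-density hypothesis.
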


We begin by proving the first part of the theorem.
\begin{proof}[Proof of Theorem~\ref{thm:Convergence}, Part~(i)]
By the submean inequality and continuity of $\phi$, for each $\epsilon>0$, there is a constant $C$ (uniform in $(\phi_1,\ldots,\phi_m)$ such that 
\begin{equation} 
\label{eq:VolumeUniformConv} \sup_X \left( \sum_{j=1}^m \phi_j - \phi\right) -  \frac{C}{\beta} - \epsilon \leq \frac{1}{\beta}\log\int_X e^{\beta(\sum_{j=1}^m \phi_j-\phi)}\omega_0^n \leq \sup_X \left(\sum_{j=1}^m \phi_j-\phi\right). \end{equation}
This means 
\begin{eqnarray} 
& |f^\beta(\phi_1,\ldots,\phi_m)- f_\phi^\beta(\phi_1,\ldots,\phi_m)| \nonumber \\ 
= & \left| \frac{1}{\beta}\log\int_X e^{\beta\sum_{j=1}^m \phi_j^\beta-\phi}\omega_0^n - \sup_X \left(\sum_{j=1}^m \phi_j^\beta-\phi\right)\right| & \nonumber \\
& \rightarrow 0 & \label{eq:UnifConv}
\end{eqnarray}
uniformly in $(\phi_1,\ldots,\phi_m)$. It follows that
\begin{equation} \label{eq:maxBeta}
\sup f_\phi^\beta \rightarrow \sup f_\phi. 
\end{equation}

Let $(\hat \phi_1^\beta,\ldots,\hat \phi_m^\beta)$ and $(\hat \phi_1',\ldots,\hat \phi_m')$ be the unique maximizers of $f_\phi^\beta$ and $f_\phi$, respectively, such that 
\begin{equation} \label{eq:FullNormalization} \sup_X \hat \phi_j^\beta = \sup_X \hat \phi_j' = 0 \end{equation}
for all $j\in \{1,\ldots,m\}$. It follows that
\begin{eqnarray} \phi_1^\beta & = & \hat \phi_1^\beta - \frac{1}{\beta}\log \int_X e^{\beta(\sum \phi_j-\phi)}\omega_0^n \nonumber \\
\phi_j^\beta & = & \hat \phi_j^\beta, \;\;\; j\in \{2,\ldots,m\} 
\nonumber 
\end{eqnarray}
and 
\begin{eqnarray} \phi_1' & = & \hat \phi_1' - \sup_X \left(\sum_{j=1}^m \phi_j-\phi\right) \nonumber \\
 \phi_j' & = & \hat \phi_j', \;\;\; j\in \{2,\ldots,m\}. 
\nonumber 
\end{eqnarray}
Consequently, by \eqref{eq:UnifConv}, $\phi_j^\beta\rightarrow \phi_j'$ if and only if $\hat \phi_j^\beta\rightarrow \hat \phi_j'$

Using \eqref{eq:FullNormalization} we get $E_j(\hat \phi_j^\beta)<0$ for all $j\in \{1,\ldots,m\}$. Together with \eqref{eq:CoerLowerBndBeta} and \eqref{eq:UnifConv}, this implies 
$$ E_{\theta_j}(\hat\phi^\beta_j) \geq \max f_\phi - C $$ 
for all $j\in \{1,\ldots,m\}$ and $\beta>>0$. This means we may extract a sequence $\{\beta_1,\beta_2,\beta_3,\ldots\}$ such that $\beta_i\rightarrow \infty$ and $\hat \phi_j^{\beta_i}$ converges in $L^1$ to some $\hat \phi_j^\infty$ for each $j\in \{1,\ldots,m\}$. By upper semi-continuity of $E_{\theta_j}$ and $L^1$-continuity of $\sup_X ( \sum_{j=1}^m\hat \phi_j^\beta - \phi)$ we get 
\begin{eqnarray} f_\phi(\hat \phi_1^\infty,\ldots,\hat \phi_m^\infty) & \geq & \lim_{i\rightarrow \infty} f_\phi(\hat \phi_1^{\beta_i},\ldots,\hat \phi_m^{\beta_i}) \nonumber \\ & = & \lim_{i\rightarrow \infty} f_\phi^{\beta_i}(\hat \phi_1^{\beta_i},\ldots,\hat \phi_m^{\beta_i}) \nonumber \\
& = & \sup f_\phi \nonumber
\end{eqnarray}
where the last equality is given by \eqref{eq:maxBeta}. It follows that $(\hat \phi_1^\infty,\ldots,\hat \phi_m^\infty)$ is a maximizer of $F_\infty$ and by uniqueness of maximizers and \eqref{eq:FullNormalization}, $\hat \phi_j^\infty = \hat \phi_j'$ and $\hat \phi_j^\beta\rightarrow \hat \phi_j'$ 
in $L^1$ for each $j\in \{1,\ldots,m\}$.  

To see that $E(\hat \phi_j^\beta)\rightarrow E(\hat \phi_j')$, note that by \eqref{eq:UnifConv} and \eqref{eq:maxBeta} we have
\begin{equation} 
\label{eq:EnergySumConv} \lim_{\beta\rightarrow \infty} \sum_{j=1}^m E(\hat \phi_j^\beta) = \sum_{j=1}^m E(\hat \phi_j'). \end{equation}
By upper semi-continuity of $E_j$ we have $\lim_{\beta\rightarrow \infty} E(\hat \phi_j^\beta)\leq  E(\hat \phi_j')$ for each $j\in \{1,\ldots,m\}$. Combining this with \eqref{eq:EnergySumConv} gives $\lim E(\hat \phi_j^\beta) = E(\hat \phi_j')$ for each $j\in \{1,\ldots,m\}$. 
\end{proof}

\begin{lemma}
\label{lem:SumBound}
Assume $\phi$ is continuous and the Monge-Ampère measure of $\mathcal P(\phi)$ has bounded density. Then there is a constant $C>0$ such that 
$$ \beta\left(\sup \sum_{j=1}^m \phi_j^\beta - \phi\right) \leq C $$ 
for all $\beta>0$.
\end{lemma}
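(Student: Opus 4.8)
The plan is to derive the bound from a Minkowski-type lower bound for the Monge--Ampère mass of the sum $\psi^\beta:=\sum_{j=1}^m\phi_j^\beta$, combined with the comparison principle against the equilibrium potential and the bounded-density hypothesis. Write $\Theta=\sum_{j=1}^m\theta_j$, $V_j=\int_X\theta_j^n$, $P=\mathcal P_\Theta(\phi)$, and $M=\sup_X(\psi^\beta-\phi)$; note that $\psi^\beta\in\PSH(\Theta)$ and that $\psi^\beta$ is continuous by Theorem~\ref{thm:BetaTemp}, so $\psi^\beta-\phi$ is continuous and attains its maximum $M$. The goal is to produce a constant $C$, independent of $\beta$, with $\beta M\le C$.

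First I would record two structural facts. Since \eqref{eq:Beta} gives $(\theta_j+dd^c\phi_j^\beta)^n=V_j\,e^{\beta(\psi^\beta-\phi)}\omega_0^n$, all these measures are absolutely continuous, and the superadditivity of the $n$-th root of the Monge--Ampère densities (the pointwise Minkowski determinant inequality, available at this regularity as in \cite{HWN}) yields
$$(\Theta+dd^c\psi^\beta)^n\ \ge\ A\,e^{\beta(\psi^\beta-\phi)}\omega_0^n,\qquad A:=\Big(\sum_{j=1}^m V_j^{1/n}\Big)^{\!n}.$$
Second, since $\psi^\beta-M$ is $\Theta$-plurisubharmonic and $\le\phi$, it lies below the envelope, so $\psi^\beta\le P+M$; moreover $P\le\phi$, and the equilibrium measure $(\Theta+dd^cP)^n$ is supported on the contact set $\{P=\phi\}$ and, by hypothesis, has density bounded by some $B$.

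The core step is a comparison-principle estimate on open superlevel sets. For $\epsilon>0$ set $U_\epsilon=\{\psi^\beta>P+M-\epsilon\}$, which is open and contains the (nonempty) argmax of $\psi^\beta-\phi$. Applying the Bedford--Taylor comparison principle to the bounded $\Theta$-psh functions $P+M-\epsilon$ and $\psi^\beta$ gives
$$\int_{U_\epsilon}(\Theta+dd^c\psi^\beta)^n\ \le\ \int_{U_\epsilon}(\Theta+dd^cP)^n.$$
On the left I would restrict to $\{\psi^\beta-\phi>M-\epsilon\}\subseteq U_\epsilon$ (using $\phi\ge P$) and insert the Minkowski bound to get the lower bound $A\,e^{\beta(M-\epsilon)}\int_{\{\psi^\beta-\phi>M-\epsilon\}}\omega_0^n$; on the right I would use that $(\Theta+dd^cP)^n$ charges only $\{P=\phi\}$, where $U_\epsilon=\{\psi^\beta-\phi>M-\epsilon\}$, together with the density bound $B$, to get the upper bound $B\int_{\{\psi^\beta-\phi>M-\epsilon\}}\omega_0^n$. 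The decisive observation is that $\{\psi^\beta-\phi>M-\epsilon\}$ is open and nonempty, hence of strictly positive $\omega_0$-volume, so this common factor cancels and leaves $A\,e^{\beta(M-\epsilon)}\le B$. Letting $\epsilon\to0$ yields $\beta M\le\log(B/A)$, so $C=\log(B/A)$ works.

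The hard part is twofold. First, one must justify the Minkowski lower bound at merely continuous regularity: here the point is that all the measures $(\theta_j+dd^c\phi_j^\beta)^n$ are absolutely continuous (Theorem~\ref{thm:BetaTemp}), so the inequality reduces to the elementary determinant inequality for their densities, or alternatively one approximates $\phi$ by smooth data. Second, and more subtly, the exact contact set $\{\psi^\beta=P+M\}\cap\{P=\phi\}$ could a priori carry no $\omega_0$-mass, which would render a naive argument on the contact set vacuous; the device of working with the open superlevel sets $\{\psi^\beta-\phi>M-\epsilon\}$, which have positive $\omega_0$-volume for every $\epsilon>0$, and only then letting $\epsilon\to0$ is exactly what makes the $\omega_0$-volume factor cancel and delivers the clean constant $\log(B/A)$.
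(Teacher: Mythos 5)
Your argument is, in structure, the same as the paper's: both reduce to the envelope $P=\mathcal P_{\sum\theta_j}(\phi)$, apply the comparison principle to the continuous $\sum\theta_j$-plurisubharmonic functions $\sum_j\phi_j^\beta$ and $P+M-\epsilon$ on an open superlevel set, use that $\left(\sum_j\theta_j+dd^cP\right)^n$ is concentrated on the contact set $\{P=\phi\}$ and has density bounded by $B$, note that the relevant superlevel set is open and nonempty (it contains the argmax) and hence has positive $\omega_0$-volume, and let $\epsilon\to 0$. The one real deviation is your lower bound for $\left(\sum_j\theta_j+dd^c\sum_j\phi_j^\beta\right)^n$, where you invoke Minkowski root-superadditivity to get the constant $A=\left(\sum_j V_j^{1/n}\right)^n$, and this is also the soft spot in your write-up: for merely bounded or continuous potentials, absolute continuity of the pure powers $(\theta_j+dd^c\phi_j^\beta)^n$ does \emph{not} reduce the inequality to ``the elementary determinant inequality for their densities.'' The expansion of the left-hand side involves mixed Bedford--Taylor products, and bounding those from below by the appropriate products of the pure densities is a genuine theorem (Dinew's mixed Monge--Amp\`ere inequality), not a pointwise computation; the smooth-approximation fallback likewise needs a stability argument you do not supply. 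The paper sidesteps this entirely: since every mixed term is a positive measure, one may simply discard them and use
$$\left(\sum_{j=1}^m(\theta_j+dd^c\phi_j^\beta)\right)^n\;\geq\;(\theta_1+dd^c\phi_1^\beta)^n\;=\;\left(\int_X\theta_1^n\right)e^{\beta\left(\sum_j\phi_j^\beta-\phi\right)}\omega_0^n,$$
which is all the argument needs: any fixed positive constant in place of your $A$ still yields $\beta M\leq \log B-\log(\mathrm{const})$. With that substitution your proof coincides with the paper's; as written, the Minkowski step is an unnecessary strengthening whose stated justification has a gap, though an easily removable one.
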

\begin{proof}
Note that $\mathcal P(\phi)\leq \phi$, hence it suffices to bound 
$$M: = \beta(\sum_{j=1}^m \phi_j^\beta - \mathcal P(\phi)).$$ By standard results, $P(\phi)$ is continuous. For $\epsilon>0$, let $U_\epsilon$ be the open set given by
$$ 
U_\epsilon = \left\{x\in X: \sum_{j=1}^m \phi_j^\beta(x) > \mathcal P(\phi)(x) + M -\epsilon \right\}. 
$$
By the comparison principle applied to the two continuous $\sum_{j=1}^m \theta_j$-plurisub-harmonic functions $\sum_{j=1}^m \phi_j^\beta$ and $\mathcal P(\phi) + M -\epsilon$ we get
\begin{eqnarray} 
\int_{U_\epsilon} e^{\beta(\sum_{j=1}^m\phi_j^\beta-\phi)} \omega_0^n & = & \int_{U_\epsilon} (\theta_1 + dd^c \phi_1^\beta)^n \nonumber \\
& \leq & \int_{U_\epsilon} \left(\sum_{j=1}^m \theta_j + \sum_{j=1}^m dd^c \phi_j^\beta\right)^n \nonumber \\
& \leq & \int_{U_\epsilon} \left(\sum_{j=1}^m \theta_j + dd^c \mathcal P(\phi)\right)^n. \label{eq:ComparisonPrinciple}
\end{eqnarray}
Moreover, let $V_\epsilon$ be the open set
$$ 
V_\epsilon = \left\{x\in X: \mathcal P(\phi)(x) \geq \phi(x) - \epsilon \right\}. 
$$
Note that by basic properties of envelopes
$$ 
\supp \left(\sum_{j=1}^m \theta_j + dd^c \mathcal P(\phi)\right)^n \subset \left\{ x\in X: \mathcal P(\phi) = \phi(x) \right\} \subset V_\epsilon 
$$

hence
$$ \int_{X\setminus V_\epsilon} \left(\sum_{j=1}^m \theta_j + dd^c \mathcal P(\phi)\right)^n = 0.$$ 
This, together with \eqref{eq:ComparisonPrinciple}, gives
\begin{eqnarray}
e^{M-2\beta\epsilon}\int_{U_\epsilon\cap V_\epsilon} \omega_0^n & \leq & \int_{U_\epsilon\cap V_\epsilon} e^{\beta(\sum_{j=1}^m\phi_j^\beta-\phi)} \omega_0^n \nonumber \\
& \leq &\int_{U_\epsilon} e^{\beta(\sum_{j=1}^m\phi_j^\beta-\phi)} \omega_0^n \nonumber \\
& \leq & \int_{U_\epsilon} \left(\sum_{j=1}^m \theta_j + dd^c \mathcal P(\phi)\right)^n \nonumber \\
& = & \int_{U_\epsilon\cap V_\epsilon} \left(\sum_{j=1}^m \theta_j + dd^c \mathcal P(\phi)\right)^n \nonumber \\
& \leq & e^C \int_{U_\epsilon\cap V_\epsilon} \omega_0^n. \nonumber 
\end{eqnarray}
Hence $M-2\beta\epsilon \leq C$. Letting $\epsilon\rightarrow 0$ proves the lemma. 
\end{proof}


\begin{proof}[Proof of Theorem~\ref{thm:Convergence}, Part~(ii)]
First of all, we claim that there is a constant $C'$ such that $|\phi_j^\beta|_{L^\infty}\leq C'$
for all $j\in \{1,\ldots,m\}$ and $\beta>0$ and 
$\sup_X |\phi_j|\leq C'$ for all $j\in \{1,\ldots,m\}$. To see this, fix $p>1$ and note that by Lemma~\ref{lem:SumBound} 
$$ 
\frac{(\theta_j+dd^c\phi_j^\beta)^n}{\omega_0^n} = e^{\beta(\sum_{j=1}^m\phi_j^\beta-\phi)} 
$$ 
is in $L^p$ and 
$$ 
\left\Vert\frac{(\theta_j+dd^c\phi_j^\beta)^n}{\omega_0^n}\right\Vert_{L^p}
$$ 
is bounded uniformly in $\beta$. The claim then follows from Ko\l odziej's $L^\infty$-estimates (\cite{Kol}, Section~2.3). See also Theorem~2.1 and Proposition~3.1 in \cite{EGZ}. 

Now, by Theorem~\ref{thm:Convergence}, Part~(i), $\phi_j^\beta\rightarrow \phi_j$ in energy for each $j\in \{1,\ldots,m\}$. By Theorem~5.7 in~\cite{BBGZ}, this implies $\phi_j^\beta\rightarrow \phi_j$ in capacity for each $j\in \{1,\ldots,m\}$. In other words, for each $\epsilon$, there is $B$ such that 
$$ Cap(|\phi_j^\beta-\phi_j|>\epsilon) < \epsilon $$
if $\beta\geq B$. Note that, since for any $\beta,\beta'>0$
$$ \{|\phi_j^\beta - \phi_j^{\beta'}|>2\epsilon \} \subset \{|\phi_j^\beta - \phi| > \epsilon \} \cup \{ |\phi-\phi_j^{\beta'}|>\epsilon \} $$
we get
$$ Cap(|\phi_j^\beta - \phi_j^{\beta'}|>2\epsilon) \leq Cap(|\phi_j^\beta - \phi| > \epsilon ) + Cap(|\phi-\phi_j^{\beta'}|>\epsilon) \leq 2\epsilon $$
if $\beta,\beta' \geq B$. Using Proposition~2.6 in~\cite{EGZ}, together with the estimates on $\left\Vert(\theta_j+dd^c\phi_j^\beta)^n/\omega_0^n\right\Vert_{L^p}$ and $\sup_X |\phi_j|$ above, we get
$$ \sup_X |\phi_j^\beta - \phi_j^{\beta'}| < 4\epsilon $$
if $\beta,\beta' \geq B$. We conclude that $\phi_j^\beta$ admits an $L^\infty$-limit. Since $\phi_j\rightarrow \phi_j$ in $L^1$, this limit has to be $\phi_j$. 
\end{proof}

We now turn to the case when $\phi$ is smooth. 
\begin{lemma}
\label{lem:C0EstimateSmooth}
Assume $\phi$ is smooth. Then there is a constant $C$ such that $\sup |\phi_j^\beta|\leq C$ for all $j\in \{1,\ldots,m\}$.
\end{lemma}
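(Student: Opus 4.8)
The plan is to show that smoothness of $\phi$ places us exactly in the setting of Lemma~\ref{lem:SumBound}, reducing the lemma to the uniform density bound already exploited in the proof of Theorem~\ref{thm:Convergence}, Part~(ii), and then to pin down the additive constants that the normalization \eqref{eq:normalization} leaves undetermined.

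First I would recall that when $\phi$ is smooth the equilibrium measure $\left(\sum_{j=1}^m\theta_j+dd^c\mathcal P(\phi)\right)^n$ is absolutely continuous with bounded density; this is the standard regularity result for envelopes of smooth weights (see \cite{B}). In particular the hypotheses of Lemma~\ref{lem:SumBound} hold, so there is a constant $C$, independent of $\beta$, with $\beta\left(\sup_X(\sum_{j=1}^m\phi_j^\beta-\phi)\right)\le C$. Since $\sum_{j=1}^m\phi_j^\beta-\phi\le \sup_X(\sum_{j=1}^m\phi_j^\beta-\phi)$ pointwise, this yields the uniform pointwise bound
$$ e^{\beta(\sum_{j=1}^m\phi_j^\beta-\phi)}\le e^C $$
on the common density, valid for every $\beta>0$. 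Hence each $\phi_j^\beta$ solves a Monge--Ampère equation whose right-hand side is bounded in $L^\infty$, hence in every $L^p$, uniformly in $\beta$, and Ko\l odziej's estimates (\cite{Kol}, Section~2.3; see also \cite{EGZ}) give a uniform oscillation bound $\sup_X\phi_j^\beta-\inf_X\phi_j^\beta\le C''$ for each $j$.

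For $j\in\{2,\ldots,m\}$ the normalization \eqref{eq:normalization} fixes $\sup_X\phi_j^\beta=0$, so the oscillation bound immediately gives $-C''\le\phi_j^\beta\le 0$. The remaining work, which I expect to be the only delicate point, is to control the unnormalized potential $\phi_1^\beta$; for this I would use that the common measure is a probability measure, i.e. $\int_X e^{\beta(\sum_{j=1}^m\phi_j^\beta-\phi)}\omega_0^n=1$ while $\omega_0$ has unit volume. On one hand this forces $\sup_X(\sum_{j=1}^m\phi_j^\beta-\phi)\ge 0$, so at a near-maximal point $x_0$ one has $\phi_1^\beta(x_0)\ge\phi(x_0)-\sum_{j\ge 2}\phi_j^\beta(x_0)\ge\inf_X\phi$, whence $\sup_X\phi_1^\beta\ge\inf_X\phi$ and, by the oscillation bound, $\phi_1^\beta\ge\inf_X\phi-C''$. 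On the other hand, if $\inf_X\phi_1^\beta$ were large then $\sum_{j=1}^m\phi_j^\beta\ge\phi_1^\beta-(m-1)C''$ would be large everywhere, making the average density exceed $1$; quantitatively $1\ge e^{\beta(\inf_X\phi_1^\beta-(m-1)C''-\sup_X\phi)}$ forces $\inf_X\phi_1^\beta\le(m-1)C''+\sup_X\phi$, and the oscillation bound then bounds $\sup_X\phi_1^\beta$ from above. Combining these gives $\sup_X|\phi_1^\beta|\le C$ uniformly in $\beta$, completing the proof. The main obstacle is thus the bookkeeping for $\phi_1^\beta$: the normalization \eqref{eq:normalization} says nothing about it, and both of its bounds must instead be extracted from the mass-one constraint together with the uniform oscillation estimate.
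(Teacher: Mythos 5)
Your proposal follows essentially the same route as the paper's proof: Berman's regularity result for envelopes of smooth weights gives bounded density of $\left(\sum_{j=1}^m\theta_j+dd^c\mathcal P_{\sum\theta_j}(\phi)\right)^n$, Lemma~\ref{lem:SumBound} then gives the uniform pointwise bound $e^{\beta(\sum_{j=1}^m\phi_j^\beta-\phi)}\le e^C$ on the common density, and Ko\l odziej's $L^\infty$-estimates conclude. The only difference is that you explicitly pin down the additive constant of $\phi_1^\beta$ (which the normalization \eqref{eq:normalization} does not fix) using the mass-one constraint together with the oscillation bounds; this bookkeeping is correct and fills in a step the paper leaves implicit when it refers to the first paragraph of the proof of Theorem~\ref{thm:Convergence}, Part~(ii).
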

\begin{proof}
By the main result in \cite{B}, $\Delta \mathcal P_{\sum_{j=1}^m \theta_j}(\phi)$ is bounded. It follows that the Monge-Ampère mass of $\mathcal P_{\sum_{j=1}^m \theta_j}(\phi)$ has bounded density. The lemma then follows from Lemma~\ref{lem:SumBound} and Ko\l odziej's $L^\infty$-esitmates as explained in the first paragraph of the proof of Theorem~\ref{thm:Convergence}, Part~(ii).
\end{proof}

\begin{lemma}
\label{lem:LaplaceEstimate}
Assume $\phi$ is smooth. Then there is a constant $C$ such that $\sup |\Delta_{\theta_j} \phi_j^\beta|\leq C$ for all $j\in \{1,\ldots,m\}$ and $\beta>>0$.
\end{lemma}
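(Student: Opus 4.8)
The plan is to establish a uniform second-order estimate by running the Aubin--Yau maximum principle argument on each of the equations in \eqref{eq:Beta}, following the thermodynamic scheme of \cite{B}. Write $\omega_j^\beta = \theta_j + dd^c\phi_j^\beta$ and $\psi^\beta = \sum_{k=1}^m \phi_k^\beta - \phi$, so that \eqref{eq:Beta} reads $(\omega_j^\beta)^n = (\int_X \theta_j^n)\, e^{\beta \psi^\beta}\, \omega_0^n$ for each $j$. Taking $\theta_j$ itself as reference metric, the logarithm of the density of $(\omega_j^\beta)^n$ with respect to $\theta_j^n$ is $F_j = \log \int_X \theta_j^n + \beta \psi^\beta + \log(\omega_0^n/\theta_j^n)$. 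The goal is a uniform (in $\beta$, for $\beta$ large) upper bound on $\tr_{\theta_j}\omega_j^\beta$, since $\Delta_{\theta_j}\phi_j^\beta = \tr_{\theta_j}\omega_j^\beta - n$ while the lower bound $\Delta_{\theta_j}\phi_j^\beta \geq -n$ is automatic from positivity of $\omega_j^\beta$. Because the solutions are smooth when $\phi$ is smooth (Theorem~\ref{thm:BetaTemp}), there is no regularity obstruction to applying the maximum principle.

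Concretely, I would consider the test function $u = \log \tr_{\theta_j}\omega_j^\beta - A\phi_j^\beta$, with $A$ large and fixed in terms of a lower bound $-B$ for the holomorphic bisectional curvature of $\theta_j$. At a maximum point $x_0$ of $u$ one has $\Delta_{\omega_j^\beta} u \leq 0$, i.e. $\Delta_{\omega_j^\beta}\log\tr_{\theta_j}\omega_j^\beta \leq A\,\Delta_{\omega_j^\beta}\phi_j^\beta = A(n - \tr_{\omega_j^\beta}\theta_j)$. Combining this with Siu's variant of the Aubin--Yau inequality, $\Delta_{\omega_j^\beta}\log\tr_{\theta_j}\omega_j^\beta \geq (\tr_{\theta_j}\omega_j^\beta)^{-1}\Delta_{\theta_j}F_j - B\,\tr_{\omega_j^\beta}\theta_j$, and choosing $A > B$, the curvature term and the two $\tr_{\omega_j^\beta}\theta_j$ terms are absorbed and one is left with $(\tr_{\theta_j}\omega_j^\beta)^{-1}\Delta_{\theta_j}F_j \leq An$ at $x_0$.

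The crucial point, and the only place where uniformity in $\beta$ is delicate, is that $\Delta_{\theta_j}F_j = \beta\,\Delta_{\theta_j}\psi^\beta + \Delta_{\theta_j}\log(\omega_0^n/\theta_j^n)$ a priori scales like $\beta$ and would ruin a naive estimate. The resolution, exactly as in \cite{B}, is that this term carries the right sign: since $dd^c\psi^\beta = \sum_k \omega_k^\beta - (\sum_k \theta_k + dd^c\phi)$ and each $\omega_k^\beta$ is positive, dropping all but the $j$-th term gives $\Delta_{\theta_j}\psi^\beta \geq \tr_{\theta_j}\omega_j^\beta - C'$, where $C' = \sup_X \tr_{\theta_j}(\sum_k \theta_k + dd^c\phi)$ is finite because $\phi$ is smooth; similarly $\Delta_{\theta_j}\log(\omega_0^n/\theta_j^n) \geq -C''$. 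Writing $T = \tr_{\theta_j}\omega_j^\beta(x_0)$ and inserting these into the inequality at $x_0$ yields $\beta(T - C') - C'' \leq An\,T$, that is $T(\beta - An) \leq \beta C' + C''$; for $\beta \geq 2An$ this gives the uniform bound $T \leq 2C' + C''/(An)$. Thus the large positive contribution $\beta\,\tr_{\theta_j}\omega_j^\beta$ coming from the density works in our favor, forcing the trace to stay small at the maximum point.

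Finally, since $u$ attains its maximum at $x_0$, for every $x \in X$ one has $\log\tr_{\theta_j}\omega_j^\beta(x) \leq \log T + A(\phi_j^\beta(x) - \phi_j^\beta(x_0))$, and the uniform $C^0$ bound of Lemma~\ref{lem:C0EstimateSmooth} converts this into a uniform upper bound on $\tr_{\theta_j}\omega_j^\beta$ over all of $X$ and all large $\beta$. The claimed estimate on $\sup_X |\Delta_{\theta_j}\phi_j^\beta|$ then follows from $\Delta_{\theta_j}\phi_j^\beta = \tr_{\theta_j}\omega_j^\beta - n$ together with $\Delta_{\theta_j}\phi_j^\beta \geq -n$. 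I expect the sole genuine obstacle to be the $\beta$-scaling of the density just discussed; once its favorable sign is exploited, the remaining ingredients are the standard Yau computation and the smoothness of $\phi_j^\beta$ from Theorem~\ref{thm:BetaTemp}, which justifies the maximum principle without further care.
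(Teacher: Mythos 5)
Your proof is correct and follows essentially the same route as the paper: both run the Siu/Aubin--Yau maximum-principle argument on \eqref{eq:Beta} and exploit the favorable sign of the $\beta$-term in the density by discarding all positive traces $\tr(\omega_k^\beta)$ except the $j$-th one, so that the large factor $\beta\,\tr(\omega_j^\beta)$ forces the trace to be small at the maximum point. The only differences are cosmetic: the paper takes as reference the Calabi--Yau metric $\omega=\theta_j+dd^c v$ with $\omega^n=\omega_0^n$ (so no $\log(\omega_0^n/\theta_j^n)$ term appears) and uses the multiplicative test function $\tr_\omega(\omega_j^\beta)\,e^{-B(\phi_j^\beta-v)}$, whereas you work directly with $\theta_j$ and the additive weight $\log\tr_{\theta_j}(\omega_j^\beta)-A\phi_j^\beta$, $A>B$.
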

\begin{proof}
We will follow the argument in \cite{B}, Proposition~2.6. First of all, since $\omega_j^\beta:=\theta_j+dd^c \phi^\beta_j>0$, we get a lower bound for $\Delta_{\omega} \phi_j^\beta$ by 
$$
\Delta_{\omega} \phi_j^\beta  =  \tr_{\omega} (\omega_j^\beta-\theta_j) \geq -\tr_\omega \theta_j. 
$$
For the upper bound, fix $j$ and let $v$ and $\omega:=\theta_j+v$ satisfy $\omega^n = \omega_0^n$. It follows that $(\omega_j^\beta)^n=e^{g}\omega^n$ where $g=\beta(\sum_{k=1}^m\phi_k^\beta-\phi)$. This implies (compare to the third equation from the bottom on page~377 in \cite{B})
\begin{eqnarray} \Delta_{\omega_j^\beta} \log \tr_{\omega} \omega_j^\beta & \geq & \frac{\Delta_{\omega} g}{\tr_{\omega} \omega_j^\beta} - B\tr_{\omega_j^\beta}\omega. \nonumber \\ 
& = & \frac{\beta \left(\sum_{k=1}^m (\tr_\omega \omega_k^\beta - \tr_\omega \theta_k) - \Delta_\omega \phi\right)}{\tr_{\omega} \omega_j^\beta} \nonumber \\
& & - B(n-\Delta_{\omega_j^\beta}(\phi_j^\beta-v)) \nonumber \\
& \geq & \frac{\beta \left(\tr_\omega \omega_j^\beta - \sum_{k=1}^m \tr_\omega \theta_k - \Delta_\omega \phi\right)}{\tr_{\omega} \omega_j^\beta} \nonumber \\
& & - B(n-\Delta_{\omega_j^\beta}(\phi_j^\beta-v)) \nonumber
\end{eqnarray}
where the equality follows by noting that $dd^c \phi_k^\beta = \omega_k^\beta-\theta_k$ and $\omega = \theta_j+v = \omega_j^\beta-\phi_j^\beta+v$. Subtracting $B\Delta_{\omega_j^\beta}(\phi_j^\beta-v)$ from both sides and multiplying this equation by $u_j^\beta:=\tr_\omega \omega_j^\beta e^{-B(\phi_j^\beta-v)}$ we get
$$
u_j^\beta \Delta_{\omega_j^\beta} \log u_j^\beta  \geq \beta u_j^\beta - \beta\left(\sum_{k=1}^m\tr_\omega \theta_k + \Delta_\omega \phi\right) e^{-B(\phi_j^\beta-v)}  -Bn u_j^\beta.
$$
At any point where $u_j^\beta$ attains it maximum we get
$$ \left(1-\frac{Bn}{\beta}\right)u_j^\beta \leq \beta\left(\sum_{k=1}^m\tr_\omega \theta_k + \Delta_\omega \phi\right) e^{-B(\phi_j^\beta-v)}. $$
Since $\sup_X |\phi_j^\beta|$ is bounded by Lemma~\ref{lem:C0EstimateSmooth}, $u_j^\beta$ and $\Delta_\omega \phi_j^\beta$ is uniformly bounded for large $\beta$.
\end{proof}

\begin{proof}[Proof of Theorem~\ref{thm:Convergence}, Part~3]
By Theorem~\ref{thm:Convergence}, Part~1, $\phi_j^\beta\rightarrow \phi_j$ in $L^1$. To prove convergence in $C^{1,\alpha}$ for $\alpha<1$ it thus suffices to establish uniform bounds on $||\phi_j^\beta||_{1,\alpha}$ for each $\alpha<1$. These follow from Lemma~\ref{lem:LaplaceEstimate}.
\end{proof}

\begin{proof}[Proof of Theorem~\ref{thm:Regularity}]
The theorem follows from Theorem~\ref{thm:BetaTemp} and Theorem~\ref{thm:Convergence}. If $\phi$ is smooth, then $\phi_j$ is the $C^{(1,\alpha)}$-limit of the smooth family $\{\phi_j^\beta\}_\beta$ for any $\alpha<1$ and boundedness of $\theta_j+dd^c\phi_j$ follows from the uniform bound on $\theta_j+dd^c\phi_j^\beta$. If the Monge-Ampère measure of $\mathcal P(\phi)$ has bounded density, then $\phi_j$ is the $L^\infty$-limit of the continuous family $\{\phi_j^\beta\}_\beta$. From this, it follows that
\begin{equation}
    \label{eq:MeasureConv}
    (\theta_j+dd^c \phi_j^\beta)^n \rightarrow (\theta_j+dd^c\phi_j)^n
\end{equation}
in the weak topology of measures for all $j\in \{1,\ldots,m\}$. As the weak limit of a sequence of measures with uniformly bounded density has bounded density, this concludes the proof.

For an alternative proof in the case when the Monge-Ampère measure of $\mathcal P(\phi)$ has bounded density we may, once convergence of $\phi_j^\beta$ in capacity and the uniform bound on the densities of $(\theta_j+dd^c \phi_j^\beta)^n$ are established, apply Theorem~1 in \cite{X} to conclude \eqref{eq:MeasureConv}. Bounded density of the limit follows as above and using \cite{Kol} we conclude that $\phi_1,\ldots,\phi_m$ are continuous. 
\end{proof}

\Addresses

\end{document}